\tikzset{->-/.style={decoration={markings,mark=at position #1 with {\arrow{>}}},postaction={decorate}}} 
\newtheorem{theorem}{Theorem}[section]
\theoremstyle{definition}
\newtheorem{proposition}[theorem]{Proposition}
\newtheorem{lemma}[theorem]{Lemma}
\newtheorem{definition}[theorem]{Definition}
\newtheorem{remark}[theorem]{Remark}
\newtheorem{corollary}[theorem]{Corollary}
\newtheorem{conjecture}[theorem]{Conjecture}
\newtheorem{example}[theorem]{Example}
\theoremstyle{remark}
\newcommand{\Z}{\mathbb Z}
\newcommand{\Q}{\mathbb Q}
\newcommand{\R}{\mathbb R} 
\newcommand{\C}{\mathbb C}
\renewcommand{\L}{\mathcal L}
\newcommand{\B}{\mathcal B}
\newcommand{\A}{\mathcal A}
\DeclareMathOperator{\Real}{Re}
\DeclareMathOperator{\Imag}{Im}
\DeclareMathOperator{\id}{id}
\DeclareMathOperator{\Li}{Li}
\DeclareMathOperator{\Ker}{Ker}
\DeclareMathOperator{\inv}{inv}
\DeclareMathOperator{\INV}{\boldsymbol{\iota}}
\DeclareMathOperator{\symb}{symb}
\def\cxymatrix#1{\xy*[c]\xybox{\xymatrix#1}\endxy}
\newcommand*{\Relbarfill@}{\arrowfill@\Relbar\Relbar\Relbar}
\newcommand*{\xequal}[2][]{\ext@arrow 0055\Relbarfill@{#1}{#2}}
\title{The Lie coalgebra of multiple polylogarithms}
\author{Zachary Greenberg}
\address{Heidelberg University\\
         Mathematisches Instituts \\
         Heidelberg, 69120, Germany}
\email{zgreenberg@mathi.uni-heidelberg.de}
\author{Dani Kaufman}
\address{University of Copenhagen \\
         Department of Mathematical Sciences \\
         2100 Copenhagen \o, Denmark 
         \newline
         {\tt \url{https://sites.google.com/danikaufman/home}}}
\email{dk@math.ku.dk}
\author{Haoran Li}
\address{University of Maryland \\
         Department of Mathematics \\
         College Park, MD 20742-4015, USA}
\email{haoranli@umd.edu }
\author{Christian K. Zickert}
\address{University of Maryland \\
         Department of Mathematics \\
         College Park, MD 20742-4015, USA \newline
         {\tt \url{http://www.math.umd.edu/~zickert}}}
\email{zickert@umd.edu}
\thanks{C.~Z.~was supported in part by DMS-1711405 \\
\newline
2020 {\em Mathematics Classification.} Primary 11G55. 
Secondary 19E15, 14D07, 32G20. 
\newline
{\em Key words and phrases: Multiple polylogarithms, motivic Lie coalgebra, symbols, polylogarithm relations, Bloch groups.}
}
\begin{document}
\begin{abstract} We use Goncharov's coproduct of multiple polylogarithms to define a Lie coalgebra over an arbitrary field. It is generated by symbols subject to inductively defined relations, which we think of as functional relations for multiple polylogarithms. In particular, we have inversion relations and shuffle relations. We relate our definition to Goncharov's Bloch groups, and to the concrete model for $\L(F)_{\leq 4}$ by Goncharov and Rudenko.
\end{abstract}
\maketitle

\section{Introduction}
\subsection{The motivic Lie coalgebra}
For a field $F$, one expects the existence of a graded Lie coalgebra $\mathcal L(F)$ such that the weight $n$ part of its Chevalley-Eilenberg complex $\wedge^*(\mathcal L(F))$
computes the motivic cohomology groups $H^i_{\mathcal M}(F,\Z(n))$ (see e.g.~\cite{GoncharovMotivicGalois}). The existence of this so-called \emph{motivic Lie coalgebra} is known for number fields~\cite{Goncharov_GaloisSymmetriesOfFundamentalGroupoidsAndNoncommutativeGeometry}. It is desirable to have a concrete description. Goncharov~\cite{GoncharovMotivicGalois} conjectures that $\wedge^*(\mathcal L(F))_n$ is rationally quasi-isomorphic to the Bloch complex $\Gamma(F,n)$ defined in~\cite{GoncharovConfigurations}. This complex has the form
\begin{equation}\label{eq:GammaComplex}
\cxymatrix{@C=2em{\B_n(F)\ar[r]^-{\delta}&\cdots\ar[r]^-{\delta}&\B_{n-k}(F)\otimes\wedge^k(F^*)\ar[r]^-{\delta}&\cdots\ar[r]^-{\delta}&\B_2(F)\otimes\wedge^{n-2}(F^*)\ar[r]^-{\delta}&\wedge^n(F^*),}}
\end{equation}
with each group $\B_k(F)$ being generated by symbols $\{x\}_k$ with $x\in F\cup\{\infty\}$ subject to relations that may be thought of as polylogarithm relations. The rightmost $\delta$ takes $\{x\}_2\otimes a$ to $x\wedge(1-x)\wedge a$, and the others take $\{x\}_k\otimes a$ to $\{x\}_{k-1}\otimes x\wedge a$. Goncharov conjectures that $\mathcal L(F)_n=\B_n(F)$ for $n\leq 3$, but for $n=4$, $\mathcal L(F)_4$ is larger. An explicit model for $\wedge^*(\mathcal L(F))_{n\leq 4}$ is given in Goncharov-Rudenko~\cite{GoncharovRudenko} (see Section~\ref{sec:GoncharovRudenko} for a brief summary).

\subsection{Multiple polylogarithms and Goncharov's coproduct}
The classical polylogarithms $\Li_n(x)$ have multivariable generalizations $\Li_{n_1,\dots,n_d}(x_1,\dots,x_d)$ called \emph{multiple polylogarithms}. Their properties are extensively studied by Goncharov~\cite{Goncharov_MultiplePolylogarithmsAndMixedTateMotives,Goncharov_GaloisSymmetriesOfFundamentalGroupoidsAndNoncommutativeGeometry}. He shows that one may view $\Li_{n_1,\dots,n_d}(x_1,\dots,x_d)$ and $\log(x)$ as elements $\Li^{\mathscr C}_{n_1,\dots,n_d}(x_1,\dots,x_d)$ and $\log^{\mathscr C}(x)$ of a certain Hopf algebra (the Hopf algebra of framed rational Hodge structures~\cite[appendix]{Goncharov_GaloisSymmetriesOfFundamentalGroupoidsAndNoncommutativeGeometry}). In fact, they generate a Hopf subalgebra of this Hopf algebra, and Goncharov gives an explicit formula for the coproduct.
 It is best described using the generating series
\begin{equation}\label{eq:GenSeriesLi}
\begin{aligned}
    \Li^{\mathscr C}(x_1,\dots,x_d|t_1,\dots,t_d)=&\sum_{n_i\geq 1} \Li^{\mathscr C}_{n_1,\dots,n_d}(x_1,\dots,x_d)t_1^{n_1-1}\cdots t_d^{n_d-1},\\
    x^t=&\sum_{n\geq 0}\frac{1}{n!}\log^{\mathscr C}(x)^n t^n.
    \end{aligned}
\end{equation}
The coproduct of a generating series is defined termwise (fixing the $t_i$).
\begin{theorem}[{\cite[Prop.~6.1]{Goncharov_GaloisSymmetriesOfFundamentalGroupoidsAndNoncommutativeGeometry}}]\label{thm:GoncharovCoproduct} The coproduct $\Delta$ of $\Li^{\mathscr C}(x_1,\dots,x_d|t_1,\dots,t_d)$ is given by
\begin{equation}\label{eq:CoproductFormula}
\begin{aligned}
    \sum \Li^{\mathscr C}(x_{i_1\to i_2},\dots,x_{i_k\to i_{k+1}}|&t_{j_1},\dots t_{j_k})\bigotimes\\
    \prod_{\alpha=0}^k(-1)^{j_\alpha-i_\alpha}x_{i_\alpha\to i_{\alpha+1}}^{t_{j_\alpha}}\Big(&\Li^{\mathscr C}(x_{j_\alpha-1}^{-1},x_{j_\alpha-2}^{-1},\dots,x_{i_\alpha}^{-1}|t_{j_\alpha}-t_{j_\alpha-1},\dots, t_{j_\alpha}-t_{i_\alpha})\\
    &\cdot\Li^{\mathscr C}(x_{j_\alpha+1},x_{j_\alpha+2},\dots,x_{i_{\alpha+1}-1}|t_{j_\alpha+1}-t_{j_\alpha},\dots t_{i_{\alpha+1}-1}-t_{j_\alpha})\Big)
\end{aligned}
\end{equation}
The sum is over all $k=0,\dots, d$, and all sequences $\{i_\alpha\}_{\alpha=0}^{k+1}$ and $\{j_\alpha\}_{\alpha=0}^k$ with
\begin{equation}\label{eq:ijconditions}
    i_\alpha\leq j_\alpha<i_{\alpha+1},\qquad j_0=i_0=0,\quad i_{k+1}=d+1,
\end{equation}
and by definition we have $x_{i\to j}=\prod_{r=i}^{j-1}x_r$(note that this convension differs from Goncharov's, but works better in this paper) and $\Li^{\mathscr C}(\emptyset|\emptyset)=1$.
\end{theorem}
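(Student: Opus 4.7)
The plan is to deduce the formula from Goncharov's general coproduct formula for motivic iterated integrals, applied to the iterated integral presentation of multiple polylogarithms. First, I would lift the classical identity
\[
\Li_{n_1,\dots,n_d}(x_1,\dots,x_d) = (-1)^d\, I(0;\, y_1, \underbrace{0,\dots,0}_{n_1-1}, y_2, \underbrace{0,\dots,0}_{n_2-1},\dots, y_d, \underbrace{0,\dots,0}_{n_d-1};\, 1),
\]
with $y_\alpha = (x_\alpha x_{\alpha+1}\cdots x_d)^{-1}$, to the Hopf algebra of framed rational Hodge structures, so that $\Li^{\mathscr C}_{n_1,\dots,n_d}$ is represented by the corresponding motivic iterated integral $I^{\mathscr C}(a_0;a_1,\dots,a_N;a_{N+1})$ of weight $N=n_1+\cdots+n_d$. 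Similarly $\log^{\mathscr C}(x) = -I^{\mathscr C}(0;x^{-1};1)$ realises the exponential factors $x^{t}$ appearing in \eqref{eq:GenSeriesLi} via the formula $I^{\mathscr C}(a;0,\dots,0;b) = \frac{1}{n!}\log^{\mathscr C}(a/b)^n$.

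Next, I would apply Goncharov's combinatorial formula
\[
\Delta\, I^{\mathscr C}(a_0;a_1,\dots,a_N;a_{N+1}) = \sum_{0=s_0<s_1<\cdots<s_k<s_{k+1}=N+1} I^{\mathscr C}(a_0; a_{s_1},\dots,a_{s_k}; a_{N+1}) \otimes \prod_{p=0}^{k} I^{\mathscr C}(a_{s_p};\, a_{s_p+1},\dots,a_{s_{p+1}-1};\, a_{s_{p+1}}),
\]
and reorganise the resulting sum. Each admissible subsequence $\{s_1<\cdots<s_k\}$ selects a multiset of ``$y$-slots'' and ``$0$-slots''. Grouping each selected $0$-slot with its nearest preceding $y$-slot, the $y$-positions retained are indexed by $j_1<\cdots<j_k$, while the integers $i_1,\dots,i_k$ record where each block of unselected $y$-slots begins; the inequalities $i_\alpha \le j_\alpha < i_{\alpha+1}$ then reproduce \eqref{eq:ijconditions}. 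Summing over the selected $0$-slots (i.e.\ over the exponents for the ``loose'' logarithms) is exactly what the generating series $x^{t_{j_\alpha}}$ packages, and the identity $y_i/y_j = \prod_{r=i}^{j-1}x_r = x_{i\to j}$ turns ratios of consecutive retained points into the arguments of the tensor factors.

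Finally, each subfactor $I^{\mathscr C}(a_{s_p}; a_{s_p+1},\dots,a_{s_{p+1}-1}; a_{s_{p+1}})$ sitting to the \emph{left} of its anchor $y_{j_\alpha}$ has its $y$-arguments in descending order; using the path-reversal identity
\[
I^{\mathscr C}(a; b_1,\dots,b_m; c) = (-1)^m\, I^{\mathscr C}(c; b_m,\dots,b_1; a)
\]
converts this segment into an $\Li^{\mathscr C}$ with arguments $x_{j_\alpha-1}^{-1},\dots,x_{i_\alpha}^{-1}$ and absorbs a sign $(-1)^{j_\alpha-i_\alpha}$, matching \eqref{eq:CoproductFormula}. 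The right-hand segment produces the second $\Li^{\mathscr C}$ factor directly, and bundling all $t$-weights into the generating series completes the calculation.

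The main obstacle is purely bookkeeping: translating Goncharov's ``choose a subsequence of $(a_1,\dots,a_N)$'' into the paired index data $(\{i_\alpha\},\{j_\alpha\})$, and carefully tracking sign and orientation conventions so that the reversed left segments, the $t_{j_\alpha}-t_{\bullet}$ shifts, and the factor $(-1)^{j_\alpha-i_\alpha}$ all come out with the right conventions. Once the dictionary between subsequences and the $(\{i_\alpha\},\{j_\alpha\})$ data is fixed, every other ingredient is a formal consequence of the iterated-integral coproduct and the exponential formula for $\log^{\mathscr C}$.
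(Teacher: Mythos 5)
First, a structural point: the paper does not prove this theorem. It is imported verbatim from Goncharov (Prop.~6.1 of the cited reference), with only the convention $x_{i\to j}=\prod_{r=i}^{j-1}x_r$ changed, and the authors state explicitly that they use Goncharov's results for motivation only. So there is no in-paper argument to compare against; what you have written is in effect an outline of Goncharov's own derivation, and the strategy is the correct and standard one: write $\Li_{n_1,\dots,n_d}(x_1,\dots,x_d)=(-1)^d I(0;y_1,0^{n_1-1},\dots,y_d,0^{n_d-1};1)$ with $y_\alpha=(x_\alpha\cdots x_d)^{-1}$, apply the combinatorial coproduct of motivic iterated integrals, reindex subsequences by the data $(\{i_\alpha\},\{j_\alpha\})$, and use path reversal on the left-of-anchor segments to produce the inverted, descending arguments and the sign $(-1)^{j_\alpha-i_\alpha}$.

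That said, the dictionary you set up is wrong in exactly the places you concede are ``purely bookkeeping,'' which is where all of the content of this proof lives. Concretely: $-I(0;x^{-1};1)=-\log(1-x)=\Li_1(x)$, not $\log(x)$; the exponential factors come from $I(a;0,\dots,0;b)=\tfrac{1}{n!}\log^n(b/a)$, not $\log^n(a/b)$; and with $y_\alpha=(x_\alpha\cdots x_d)^{-1}$ one has $y_j/y_i=x_{i\to j}$ for $i<j$, so your stated ratio is inverted. More substantively, the central reorganization is not a clean bijection between subsequences and pairs $(\{i_\alpha\},\{j_\alpha\})$: for a fixed choice of selected $y$-slots one must sum over all choices of selected $0$-slots, and the resulting gap factors include degenerate integrals of the form $I(0;\cdots;0)$ and $I(y;0,\dots,0;0)$ whose vanishing/collapse (via the degeneracy axioms and shuffle regularization) is precisely what forces the surviving index patterns to satisfy \eqref{eq:ijconditions}, produces the $x_{i_\alpha\to i_{\alpha+1}}^{t_{j_\alpha}}$ factors, and generates the $t_{j_\alpha}-t_\bullet$ shifts in the subordinate $\Li^{\mathscr C}$ arguments. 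Your sketch asserts the outcome of this step rather than carrying it out, and with the sign and ratio conventions as currently written the computation would not land on \eqref{eq:CoproductFormula}. The approach is right; the proof is not complete until the dictionary is corrected and the degenerate terms are genuinely accounted for.
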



\begin{example}
When $d=2$, the $\{i_\alpha\}$ and $\{j_\alpha\}$ satisfying \eqref{eq:ijconditions} are:
\begin{equation}(\overset{i_0}{0},\overset{j_0}{0}|\overset{i_1}{1},\overset{j_1}{1}|\overset{i_2}{2},\overset{j_2}{2}|\overset{i_3}{3}),\quad  (\overset{i_0}{0},\overset{j_0}{0}|\overset{i_1}{1},\overset{j_1}{1}|\overset{i_2}{3}),\quad (\overset{i_0}{0},\overset{j_0}{0}|\overset{i_1}{1},\overset{j_1}{2}|\overset{i_2}{3}),\quad (\overset{i_0}{0},\overset{j_0}{0}|\overset{i_1}{2},\overset{j_1}{2}|\overset{i_2}{3}),\quad (\overset{i_0}{0},\overset{j_0}{0}|\overset{i_1}{3}),
\end{equation}
and it follows that
\begin{equation}\label{eq:CoprodDepth2}
\begin{aligned}
\Delta\Li^{\mathscr C}(x_1,x_2|t_1,t_2)=\Li^{\mathscr C}(x_1,x_2|t_1,t_2)\otimes x_1^{t_1}x_2^{t_2}+\Li^{\mathscr C}(x_1x_2|t_1)\otimes(x_1x_2)^{t_1}\Li^{\mathscr C}(x_2|t_2-t_1)\\
-\Li^{\mathscr C}(x_1x_2|t_2)\otimes(x_1x_2)^{t_2}\Li^{\mathscr C}(x_1^{-1}|t_2-t_1)+\Li^{\mathscr C}(x_2|t_2)\otimes\Li^{\mathscr C}(x_1|t_1)x_2^{t_2}+1\otimes\Li^{\mathscr C}(x_1,x_2|t_1,t_2).
\end{aligned}
\end{equation}
One can then compute $\Delta\Li^{\mathscr C}_{n_1,n_2}(x_1,x_2)$ as the coefficient of $t_1^{n_1-1}t_2^{n_2-1}$ of the right-hand side of~\eqref{eq:CoprodDepth2}. 
\end{example}

Given a graded Hopf algebra $H$, we have a Lie coalgebra $L=\frac{H_{>0}}{H_{>0}H_{>0}}$ with cobracket induced by the coproduct. We are thus interested in the quotient by products. The following elementary corollary of Theorem~\ref{thm:GoncharovCoproduct} is our main motivation.
\begin{corollary}\label{cor:CobracketLi} Modulo products and constants, $\Delta\Li^{\mathscr C}(x_1,\dots,x_d|t_1,\dots,t_d)$ can be written as 
\begin{equation}\label{eq:Cobracket}
\begin{aligned}
&\Li^{\mathscr C}(\mathbf x_{1,\dots,d}|\mathbf t_{1,\dots,d})\otimes \sum_{p=1}^dt_p\log^{\mathscr C}(x_p)+\\&\sum_{p=2}^d\Li^{\mathscr C}(\mathbf x_{p,\dots,d}|\mathbf t_{p,\dots,d})\otimes\Li^{\mathscr C}(\mathbf x_{1,\dots,p-1}|\mathbf t_{1,\dots,p-1})+\\
&\smashoperator{\sum_{1\leq p<q\leq d}} \Li^{\mathscr C}(\mathbf x_{1,\dots,p\to q,\dots,d}|\mathbf t_{1,\dots,p,q+1,\dots,d})\otimes \Li^{\mathscr C}(\mathbf x_{p+1,\dots, q}|\mathbf t_{p+1,\dots,q}-t_p)+\\
&\smashoperator{\sum_{1\leq p<q\leq d}}(-1)^{q-p}\Li^{\mathscr C}(\mathbf x_{1,\dots,p\to q,\dots,d}|\mathbf t_{1,\dots,p-1,q,\dots,d})\otimes\Li^{\mathscr C}(x_{q-1}^{-1},x_{q-2}^{-1},\dots, x_p^{-1}|t_q-\mathbf t_{q-1,\dots,p})
\end{aligned}
\end{equation}
Where $\mathbf x_{1,\dots,p\to q,\dots,d}$ is shorthand for $(x_1,\dots,x_{p-1},\prod_{r=p}^q x_r,x_{q+1},\dots,x_d)$ and dots indicate that indices increase (or decrease) by 1. We stress that the product is from $p$ to $q$, not to $q-1$.
\end{corollary}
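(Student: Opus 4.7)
The strategy is to project the explicit coproduct formula of Theorem~\ref{thm:GoncharovCoproduct} to $L\otimes L$, where $L=H_{>0}/(H_{>0}H_{>0})$, and identify which index tuples $(k,\{i_\alpha\},\{j_\alpha\})$ contribute. Three facts drive the analysis: (a) in $L$ one has $x^t\equiv 1+t\log^{\mathscr C}(x)$, since $\log^{\mathscr C}(x)^n$ for $n\ge 2$ is a product; (b) the backward factor $\Li^{\mathscr C}(x_{j_\alpha-1}^{-1},\dots,x_{i_\alpha}^{-1}|\cdots)$ equals $1$ exactly when $j_\alpha=i_\alpha$ (empty argument list), and the forward factor $\Li^{\mathscr C}(x_{j_\alpha+1},\dots,x_{i_{\alpha+1}-1}|\cdots)$ equals $1$ exactly when $j_\alpha=i_{\alpha+1}-1$; (c) any product of two elements of $H_{>0}$ vanishes in $L$. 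Since the right-hand side of each summand of~\eqref{eq:CoproductFormula} is an ordered product of $3(k+1)$ Hopf-algebra factors (an $x^t$, a backward $\Li^{\mathscr C}$, and a forward $\Li^{\mathscr C}$ for each $\alpha$), only those summands with at most one non-unit factor in $L$ survive the projection.

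The surviving tuples split into four families, matching the four lines of~\eqref{eq:Cobracket}. \emph{Line 1:} all backward/forward Li factors equal $1$, forcing $k=d$ and $i_\alpha=j_\alpha=\alpha$; the left factor is $\Li^{\mathscr C}(x_1,\dots,x_d|t_1,\dots,t_d)$, and the right $\prod_{\alpha=1}^d x_\alpha^{t_\alpha}$ projects to $\sum_{p=1}^d t_p\log^{\mathscr C}(x_p)$. \emph{Line 3:} exactly one forward factor is nontrivial, at some $\alpha_0\ge 1$. Writing $p=i_{\alpha_0}$, $q=i_{\alpha_0+1}-1$ with $j_\alpha=i_\alpha$ and $i_{\alpha+1}=i_\alpha+1$ for $\alpha\ne\alpha_0$ gives the left factor $\Li^{\mathscr C}(\mathbf x_{1,\dots,p\to q,\dots,d}|\mathbf t_{1,\dots,p,q+1,\dots,d})$ and the right factor $\Li^{\mathscr C}(x_{p+1},\dots,x_q|\mathbf t_{p+1,\dots,q}-t_p)$. \emph{Line 2:} the analogous case at $\alpha_0=0$ (under the convention $t_0:=0$) yields $\Li^{\mathscr C}(x_{q+1},\dots,x_d|t_{q+1},\dots,t_d)\otimes\Li^{\mathscr C}(x_1,\dots,x_q|t_1,\dots,t_q)$; relabeling $p:=q+1$ gives line 2, with $p\in\{2,\dots,d\}$ coming from discarding $q=0$ (absorbed into Line 1) and $q=d$ (a constant on the left). \emph{Line 4:} exactly one backward factor is nontrivial, necessarily at some $\alpha_0\ge 1$. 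Writing $p=i_{\alpha_0}<q=j_{\alpha_0}$ and $i_{\alpha_0+1}=q+1$, the prefactor $(-1)^{j_{\alpha_0}-i_{\alpha_0}}=(-1)^{q-p}$ produces the sign, the left factor again has the contracted form $\Li^{\mathscr C}(\mathbf x_{1,\dots,p\to q,\dots,d}|\mathbf t_{1,\dots,p-1,q,\dots,d})$, and the right factor is $\Li^{\mathscr C}(x_{q-1}^{-1},\dots,x_p^{-1}|t_q-\mathbf t_{q-1,\dots,p})$.

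The main obstacle is the careful bookkeeping of boundary conventions ($t_0=0$, $\Li^{\mathscr C}(\emptyset|\emptyset)=1$, and the edge cases $\alpha=0$ and $\alpha=k$), translating the single-tuple indexing of~\eqref{eq:CoproductFormula} into the $(p,q)$-indexing of~\eqref{eq:Cobracket}, and checking that any mixed contribution---two non-unit factors, or a $\log^{\mathscr C}$ pulled from an $x^t$ while a nontrivial $\Li^{\mathscr C}$ is retained---lies in $H_{>0}H_{>0}$ and hence vanishes in $L\otimes L$.
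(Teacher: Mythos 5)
Your proposal is correct and follows essentially the same route as the paper: both arguments enumerate the index tuples $(k,\{i_\alpha\},\{j_\alpha\})$ whose terms in \eqref{eq:CoproductFormula} survive modulo products and constants, and identify the four resulting families with the four lines of \eqref{eq:Cobracket}. Your write-up merely spells out in more detail the bookkeeping (the role of $x^t\equiv 1+t\log^{\mathscr C}(x)$, the $t_0=0$ and empty-argument conventions, and the discarded boundary cases) that the paper leaves as ``one easily checks.''
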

\begin{proof}
One easily checks that the only sequences $\{i_\alpha\}$ and $\{j_\alpha\}$ for which the corresponding term in \eqref{eq:CoproductFormula} does not involve products are
\begin{equation}
\begin{aligned}
&(\overset{i_0}{0},\overset{j_0}{0}|\overset{i_1}{1},\overset{j_1}{1}|\overset{i_2}{2},\overset{j_2}{2}|\cdots|\overset{i_{d}}{d},\overset{j_{d}}{d}|,\overset{i_{d+1}}{d+1})\\
&(\overset{i_0}{0},\overset{j_0}{0}|\overset{i_1}{p},\overset{j_1}{p}|\overset{i_2}{p+1},\overset{j_2}{p+1}|\cdots|\overset{i_{k}}{d},\overset{j_{k}}{d}|,\overset{i_{k+1}}{d+1})\\
&(\overset{i_0}{0},\overset{j_0}{0}|\overset{i_1}{1},\overset{j_1}{1}|\cdots|\overset{i_p}{p},\overset{j_p}{p}|\overset{i_{p+1}}{q+1},\overset{j_{p+1}}{q+1}|\cdots|\overset{i_{k}}{d},\overset{j_{k}}{d}|,\overset{i_{k+1}}{d+1})\\
&(\overset{i_0}{0},\overset{j_0}{0}|\overset{i_1}{1},\overset{j_1}{1}|\cdots|\overset{i_p}{p},\overset{j_p}{q}|\overset{i_{p+1}}{q+1},\overset{j_{p+1}}{q+1}|\cdots|\overset{i_{k}}{d},\overset{j_{k}}{d}|,\overset{i_{k+1}}{d+1}),
\end{aligned}    
\end{equation}
and that the corresponding terms are exactly those given in~\eqref{eq:Cobracket}.
\end{proof}

\subsection{Real single valued polylogarithms}
One has single valued variants $\mathcal\L_n(x)$ of the classical polylogarithms (see e.g.~\cite{Deligne_InterpretationMotiviqueDeLaConjectureDeZagierReliantPolylogarithmesEtRegulateurs,ZagierDedekindZeta})
\begin{equation}
\L_n(z)=\begin{matrix}\Real \text{if $n$ is odd}\\\Imag \text{if $n$ is even}\end{matrix}\left (\sum_{r=0}^{n-1}\frac{2^rB_r}{r!}\Li_{n-r}(z)(\log\vert z\vert)^r\right ).
\end{equation}
The following result justifies thinking about the relations in $\B_n(F)$ as polylogarithm relations:
\begin{theorem}[{\cite[Prop.~3]{ZagierDedekindZeta}}]\label{thm:RnAndLn} For any element $\alpha=\sum n_i{f_i(t)}\in \B_n(\C(t))$ with $\delta(\alpha)=0$ in $\B_{n-1}(\C(t))\otimes\C(t)^*$ we have
\begin{equation}
    \sum n_i\L_n(f_i(t))=\textnormal{constant}.
\end{equation}
\end{theorem}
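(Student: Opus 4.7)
The plan is to induct on the weight $n$. The base case $n=2$ reduces to the classical fact that the Bloch--Wigner dilogarithm respects the five-term relation: since $\B_2(\C(t))$ is defined modulo these relations, any $\alpha$ with $\delta(\alpha)=0$ gives a function of $t$ whose derivative can be checked to vanish by a direct differentiation using $d\L_2(z) = -\Imag\bigl(\log(1-z)\bigr)\, d\log|z| + \Imag\bigl(\log z\bigr)\, d\log|1-z|$ and the bilinearity that forces cancellation under the wedge condition $\sum n_i\, f_i\wedge(1-f_i)=0$.

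For the induction step, set $F(t)=\sum n_i\L_n(f_i(t))$ and compute $dF$. The key tool is the first-order differential identity characterizing $\L_n$,
\begin{equation*}
d\L_n(z)=\L_{n-1}(z)\,\alpha_n(z),
\end{equation*}
where $\alpha_n$ is an explicit real closed $1$-form on $\C^*\setminus\{1\}$ built from $d\log|z|$ and $d\arg z$ (the precise form depending on the parity of $n$, see \cite{ZagierDedekindZeta}). The chain rule gives
\begin{equation*}
dF(t)=\sum n_i\,\L_{n-1}(f_i(t))\,(f_i^*\alpha_n)(t),
\end{equation*}
so $dF$ is, at each $t$, a $\C$-linear combination of values of $\L_{n-1}$ whose coefficients are real or imaginary parts of $f_i'(t)/f_i(t)$. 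Now compose the hypothesis $\sum n_i\{f_i(t)\}_{n-1}\otimes f_i(t)=0$ in $\B_{n-1}(\C(t))\otimes \C(t)^*$ with the logarithmic-derivative homomorphism $g\mapsto g'(t)/g(t)$; this yields, for every $t$,
\begin{equation*}
\sum n_i\,\frac{f_i'(t)}{f_i(t)}\,\{f_i(t)\}_{n-1}=0 \text{ in } \B_{n-1}(\C)\otimes_\Z\C.
\end{equation*}
The inductive hypothesis, viewed as saying that $\L_{n-1}$ descends to an $\R$-linear functional on the kernel of $\delta$ in $\B_{n-1}(\C)\otimes_\Z\R$, then forces $dF\equiv 0$ when applied to the real and imaginary parts, so $F$ is locally constant and hence constant on each connected component of its domain.

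The delicate point is that the theorem as stated gives constancy for $t$-families of $\Z$-linear cocycles, while the algebraic identity produced by the logarithmic derivative is $\C$-linear. To close this gap, one must strengthen the inductive hypothesis so that $\L_{n-1}$ is a well-defined functional on $\ker(\delta)\otimes_\Z\R\subset\B_{n-1}(\C)\otimes_\Z\R$, and prove this jointly with the main claim. The rest of the argument is careful bookkeeping with the real and imaginary decomposition of $\alpha_n$; it is likely cleanest to package everything by introducing the regulator current $r_n(\alpha) = \sum n_i\,\L_n(f_i(t))$ and verifying its closedness directly from the cocycle hypothesis, which converts the induction into a check of compatibility between the differential recursion for $\L_n$ and the boundary map $\delta$.
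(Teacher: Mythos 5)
A point of order first: the paper does not prove this statement --- it is quoted from Zagier \cite[Prop.~3]{ZagierDedekindZeta} and used as a black box --- so your proposal can only be compared against Zagier's original argument. Your overall strategy (induct on $n$, differentiate $F(t)=\sum n_i\L_n(f_i(t))$, and use $\delta(\alpha)=0$ to force $dF=0$) is indeed that argument's strategy, and your closing remark that the induction must be run jointly with the statement that $\L_{n-1}$ descends to $\B_{n-1}(\C)$ is correct and essential: the relations defining $\B_{n-1}(\C)$ are exactly specialization differences $\beta(p)-\beta(q)$ of cocycles, so the weight-$(n-1)$ case of the theorem is precisely what makes $\L_{n-1}\colon\B_{n-1}(\C)\to\R$ well defined. (The $\Z$-versus-$\R$ issue you worry about at the end is, however, a non-issue: apply the $\Z$-linear map $\L_{n-1}\otimes\ReIm(d\log)$ directly to the relation $\sum n_i\{f_i\}_{n-1}\otimes f_i=0$ in $\B_{n-1}(\C(t))\otimes\C(t)^*$; no extension of scalars is needed.)

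The genuine gap is the differential identity $d\L_n(z)=\L_{n-1}(z)\,\alpha_n(z)$ with $\alpha_n$ ``built from $d\log|z|$ and $d\arg z$.'' No such identity holds, so the cancellation you deduce from it is not established. Already for $n=2$ the correct formula is $d\L_2(z)=\log|z|\,d\arg(1-z)-\log|1-z|\,d\arg z$ (your displayed formula for $d\L_2$ is not this and is in fact wrong, e.g.\ it vanishes identically at real $z\in(0,1)$ where $\partial_y\L_2\neq 0$); its first term involves $d\arg(1-z)$ and is not of the shape $\L_1(z)\cdot(\text{form in }z)$, and it is killed only by exploiting the antisymmetry of the condition $\sum n_i\,f_i\wedge(1-f_i)=0$, not term-by-term against $\{f_i\}_{1}\otimes f_i$. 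For general $n$, differentiating $\sum_r \frac{2^rB_r}{r!}\Li_{n-r}(z)(\log|z|)^r$ produces, besides a genuine term $\L_{n-1}(z)$ times a $1$-form in $z$, (i) the wrong-parity real/imaginary part of the weight-$(n-1)$ combination and (ii) a term $(\log|z|)^{n-1}$ times a form in $d\log(1-z)$; the entire point of the Bernoulli coefficients in the definition of $\L_n$ is to make (i) telescope and (ii) be controllable, and verifying this is the actual content of Zagier's proof. Since you defer ``the precise form'' of $\alpha_n$ to the reference, the step that does all the work is outsourced: as written, your argument establishes the theorem only for a hypothetical family of functions satisfying your clean first-order recursion, and $\L_n$ is not such a family.
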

There are also single valued analogues $\L_{n_1,\dots,n_d}$ of the multiple polylogarithms $\Li_{n_1,\dots,n_d}$. There is no closed formula, but they can be computed from a variation matrix~\cite{ZhaoMultiplePolylogsMonodromyHodgeStructures}. We shall here only need that $\L_{n_1,\dots,n_d}$ is defined on 
\begin{equation}\label{eq:SdC}
S_d(\C)=\Big\{(x_1,\dots,x_d)\in(\C^*)^d\bigm\vert  \prod_{r=i}^j x_r\neq 1 \text{ for all }i\leq j\in\{1,\dots,d\}\Big\}
\end{equation}
and that
\begin{equation}\label{eq:ZeroLimit}
    \lim_{x_i\to 0}\mathcal L_{n_1,\dots,n_d}(x_1,\dots,x_d)=0\qquad \text{for all $i$}.
\end{equation}
The functions $\L_n(x)$ also satisfy that $\lim_{x\to\infty}\L_n(x)=0$ for $n>1$, but in higher depth the limit as $x_i$ tends to $\infty$ is no longer $0$.


\subsection{Structure of the paper}
In Section~\ref{sec:Lsymb} we define a purely symbolic coalgebra $\mathbb L^{\symb}(F)$ with no relations. We have $\mathbb L_1^{\symb}(F)=F^*$, and $\mathbb L^{\symb}_{>1}$ is generated by symbols $[x_1,\dots,x_d]_{n_1,\dots,n_d}$ with $(x_1,\dots,x_d)\in S_d(F)$, where $d$ and $n_1,\dots, n_d$ are positive integers. Thinking of a symbol as a polylogarithm we define the cobracket as in Corollary~\ref{cor:CobracketLi} and show directly that $\delta^2=0$. In Section~\ref{sec:Relations} we inductively define a group of relations $R_n(F)$ and define 
\begin{equation}
\mathbb L_n(F)=\mathbb L_n^{\symb}(F)/R_n(F).
\end{equation}
The definition mimics Goncharov's definition of relations in $\B_n(F)$, and the proof that the cobracket takes relations to 0 in $\wedge^2(\mathbb L(F))$ follows Goncharov as well. Section~\ref{sec:BasicExamples} gives some basic examples of relations, and Section~\ref{sec:NotAllSymbols} discusses the problem of defining symbols when $(x_1,\dots,x_d)\notin S_d(F)$. For example, $[1,1]_{1,1}$ is not well defined.
In Section~\ref{sec:Speculation} we speculate that $\mathbb L(F)$ is the motivic Lie coalgebra $\L(F)$ and conjecture a generalization of Theorem~\ref{thm:RnAndLn}, which justifies thinking of $R_n(F)$ as polylogarithm relations.
Section~\ref{sec:InversionRelations} discusses the \emph{inversion relations}, which are inspired by Goncharov's inversion relations for multiple polylogarithms. In particular, we show that in $\mathbb L(F)$ one can express each symbol $[x_d^{-1},\dots,x_1^{-1}]_{n_d,\dots,n_1}$ in terms of symbols involving only non-inverted $x_i$. Section~\ref{sec:AltCobracket} shows that one can use this to define an alternative cobracket on $\mathbb L^{\symb}(F)$ without inverted $x_i$. The relations are the same. The alternative one may in fact be more natural (see e.g.~Remark~\ref{rm:ShuffleZeroInLsymb}).
Section~\ref{sec:ShuffleRelations} briefly discusses the shuffle product relations showing that at least in low depth a shuffle product is 0 in $\mathbb L(F)$. Finally, Section~\ref{sec:GoncharovRudenko} relates our work to that of Goncharov and Rudenko, who gave a concrete model for $\L_{n\leq 4}(F)$.

\begin{remark} Although our work is heavily inspired by the work of Goncharov, it does not require any of Goncharov's results, except for motivation.
\end{remark}

\subsection*{Acknowledgment} We thank Lars Hesselholt for helpful comments. C.~Z.~was funded in part by NSF grant DMS-1711405.

\section{A purely symbolic Lie coalgebra}\label{sec:Lsymb}
For a field $F$, and a positive integer $d$, let
\begin{equation}\label{eq:SdDef}
S_d(F)=\Big\{(x_1,\dots,x_d)\in(F^*)^d\bigm\vert  \prod_{r=i}^j x_r\neq 1 \text{ for all }i\leq j\in\{1,\dots,d\}\Big\}.
\end{equation}
We wish to define a graded Lie coalgebra
\begin{equation}
    \mathbb L^{\symb}(F)=\bigoplus_{n=1}^{\infty} \mathbb L_n^{\symb}(F).
\end{equation}
We first define $\mathbb L_1^{\symb}(F)=F^*$,
which we shall identify with the abelian group generated by symbols $[x]_1$ with $x\in F\setminus \{0,1\}$ and $[x]_0$ for $x\in F\setminus \{0,1\}$ subject to the relations
\begin{equation}
   [x]_1=-[1-x]_0,\qquad [xy]_0=[x]_0+[y]_0. 
\end{equation}
For $n>1$, define $\mathbb L_n^{\symb}(F)$ to be generated by symbols $[x_1,\dots,x_d]_{n_1,\dots,n_d}$ with 
\begin{equation}
    (x_1,\dots,x_d)\in S_d(F),\qquad d,n_1,\dots,n_d\in\Z_+,\qquad n_1+\dots+n_d=n.
\end{equation}
We refer to $n_1+\dots+n_d$ as the \emph{weight} and $d$ as the \emph{depth} of a symbol, but stress that $[x]_0$ is in weight 1, not 0. We think of $[x_1,\dots,x_d]_{n_1,\dots,n_d}$ as representing $\Li_{n_1,\dots,n_d}(x_1,\dots,x_d)$ and $[x]_0$ as representing $\log(x)$.




\subsection{The cobracket on $\mathbb L^{\symb}(F)$}
As in~\eqref{eq:GenSeriesLi} we define
\begin{equation}
[x_1,\dots,x_d|t_1,\dots,t_d]=\sum_{n_i\geq 1} [x_1,\dots,x_d]_{n_1,\dots,n_d}t_1^{n_1-1}\cdots t_d^{n_d-1},
\end{equation}
and define $\delta\colon\mathbb L^{\symb}(F)\to\wedge^2(\mathbb L^{\symb}(F))$ to be zero in weight 1, and otherwise given by~\eqref{eq:Cobracket}. It is convenient to write $\delta=\delta_1+\delta_2+\delta_3+\delta_4$, where for $X=[\mathbf x_{1,\dots,d}|\mathbf t_{1,\dots,d}]$
\begin{equation}
\begin{aligned}
\delta_1(X)&=X\wedge \left(\sum_{p=1}^dt_p[x_p]_0\right)\\
\delta_2(X)&=\sum_{p=2}^d[\mathbf x_{p,\dots,d}|\mathbf t_{p,\dots,d}]\wedge[\mathbf x_{1,\dots,p-1}|\mathbf t_{1,\dots,p-1}]\\
\delta_3(X)&=\sum_{1\leq p<q\leq d} [\mathbf x_{1,\dots,p\to q,\dots,d}|\mathbf t_{1,\dots,p,q+1,\dots,d}]\wedge [\mathbf x_{p+1,\dots, q}|\mathbf t_{p+1,\dots,q}-t_p]\\
\delta_4(X)&=\sum_{1\leq p<q\leq d}(-1)^{q-p}[\mathbf x_{1,\dots,p\to q,\dots,d}|\mathbf t_{1,\dots,p-1,q,\dots, d}]\wedge[x_{q-1}^{-1},\dots,x_p^{-1}|t_q-\mathbf t_{q-1,\dots,p}].
\end{aligned}
\end{equation}

\begin{example} In depth 1, we have $\delta[x|t]=[x|t]\wedge t[x]_0$, from which it follows that $\delta[x]_n=[x]_{n-1}\wedge [x]_0$ for $n>1$. Note the case $\delta[x]_2=[x]_1\wedge[x]_0=x\wedge (1-x)\in\wedge^2(F^*)$. We thus recover the boundary map in the Bloch complex.
\end{example}
\begin{example} In depth 2, $\delta[x_1,x_2|t_1,t_2]$ equals (compare with~\eqref{eq:CoprodDepth2})
\begin{equation}
\begin{aligned}
\delta[x_1,x_2|t_1,t_2]=[x_1,x_2|t_1,t_2]\wedge(t_1[x_1]_0+t_2[x_2]_0)+[x_2|t_2]\wedge [x_1|t_1]+\\{[x_1x_2|t_1]\wedge[x_2|t_2-t_1]}-[x_1x_2|t_2]\wedge[x_1^{-1}|t_2-t_1].
\end{aligned}
\end{equation}
In particular, $\delta[x_1,x_2]_{r,s}$ is given by
\begin{equation}
\begin{aligned}
&[x_1,x_2]_{r-1,s}\wedge[x_1]_0+[x_1,x_2]_{r,s-1}\wedge[x_2]_0+[x_2]_s\wedge[x_1]_r\\
&+\sum_{i=1}^r(-1)^{r-i}\binom{r+s-1-i}{s-1}[x_1x_2]_i\wedge[x_2]_{r+s-i}\\
&+\sum_{i=1}^s (-1)^{r}\binom{r+s-1-i}{r-1}[x_1x_2]_i\wedge[x_1^{-1}]_{r+s-i}.
\end{aligned}
\end{equation}
In the case where $r$ or $s$ is 1, the symbols $[x_1,x_2]_{0,s}$ and $[x_1,x_2]_{r,0}$ are interpreted as 0.
\end{example}

\begin{remark} In our definition of $\wedge^2(\mathbb L^{\symb}(F))$, $x\wedge x$ is identically 0, not 2-torsion.
\end{remark}


\begin{theorem}\label{thm:cobracket} The cobracket above makes $\mathbb L^{\symb}(F)$ into a Lie coalgebra.
\end{theorem}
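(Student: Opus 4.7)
The plan is to establish the co-Jacobi identity, i.e., that the natural extension $\delta^{(3)}\colon \mathbb L^{\symb}(F) \to \wedge^3 \mathbb L^{\symb}(F)$ of $\delta$ to $\wedge^2 \mathbb L^{\symb}(F)$ vanishes (antisymmetry is automatic from the $\wedge^2$ codomain). I would prove this by a direct combinatorial analysis of the formula from Corollary~\ref{cor:CobracketLi}, working throughout with the generating series $X = [\mathbf x_{1,\dots,d}|\mathbf t_{1,\dots,d}]$ and exploiting the decomposition $\delta = \delta_1 + \delta_2 + \delta_3 + \delta_4$.

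First, I would expand $\delta^{(3)}(X)$ as a sum of sixteen blocks $\delta_i\delta_j$, $i,j \in \{1,\dots,4\}$. The blocks involving $\delta_1$ simplify quickly: the log factor $[x_p]_0$ lives in weight one and has trivial cobracket, and antisymmetry of the wedge kills $\delta_1\delta_1$ outright and collapses the remaining $\delta_1\delta_i$ and $\delta_i\delta_1$ contributions into a single manageable sum of terms of the form $A\wedge B\wedge t_p[x_p]_0$. The heart of the proof is therefore the cancellations among the terms arising from $\delta_2, \delta_3, \delta_4$ composed with themselves.

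Next, I would classify the nonzero terms appearing in $\delta^{(3)}(X)$ by the \emph{shape} of the resulting triple of symbols, i.e. by how the index set $\{0,1,\dots,d+1\}$ is partitioned by two consecutive cuts (either disjoint, nested, or one refining the other). The key observation I expect to exploit is that each such combinatorial datum arises from exactly two of the $\delta_i\delta_j$ compositions, corresponding to the two orders in which the two cuts can be performed. The task then reduces to checking that the two contributions enter with opposite signs in $\wedge^3$. The sign matching is delicate: one must track the $(-1)^{q-p}$ in $\delta_4$ and verify that the shifted parameters $t_{p+1,\dots,q}-t_p$ and $t_q-\mathbf t_{q-1,\dots,p}$ compose correctly when one cut is refined by another.

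The main obstacle is the combinatorial bookkeeping. The composite arguments $\mathbf x_{1,\dots,p\to q,\dots,d}$ appearing in $\delta_3$ and $\delta_4$ interact nontrivially with subsequent cuts, since a second cut may fall inside, across, or outside the merged block $\prod_{r=p}^q x_r$; the reversed and inverted arguments in $\delta_4$ must likewise be reconciled against cuts produced by $\delta_3$. A useful sanity check, not strictly needed for the proof, is that by Corollary~\ref{cor:CobracketLi}, $\delta$ is precisely the cobracket induced on $H_{>0}/(H_{>0}\cdot H_{>0})$ from Goncharov's coassociative coproduct~\eqref{eq:CoproductFormula}, so co-Jacobi holds \emph{a priori} on that quotient; the direct verification amounts to extracting this identity from the generating-series combinatorics without invoking the Hopf algebra framework.
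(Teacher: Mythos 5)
Your proposal is correct and follows essentially the same route as the paper: both decompose $\delta=\delta_1+\delta_2+\delta_3+\delta_4$ and verify $\delta^2=0$ by direct pairwise cancellation among the composites $\delta_i\delta_j$ (the paper organizes this as the identities $\delta_1^2=\delta_2^2=0$, $\delta_1\delta_i+\delta_i\delta_1=0$, $\delta_3^2+\delta_2\delta_3+\delta_3\delta_2=0$, and $\delta_2\delta_4+\delta_4\delta_2+\delta_4^2+\delta_3\delta_4+\delta_4\delta_3=0$, which is precisely your ``two cuts performed in either order'' pairing). The paper likewise writes out only two of these checks explicitly and declares the rest straightforward, so your level of detail matches its own.
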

\begin{proof}
We must show that $\delta^2=0$. To do this, it is enough to show that
\begin{equation}
\begin{gathered}
\delta_1^2=0,\qquad\delta_2^2=0,\qquad\delta_1\delta_i+\delta_i\delta_1=0,\\
\delta_3^2+\delta_2\delta_3+\delta_3\delta_2=0,\qquad \delta_2\delta_4+\delta_4\delta_2+\delta_4^2+\delta_3\delta_4+\delta_4\delta_3=0.
\end{gathered}
\end{equation}
 The proof that $\delta_1^2=0$ is elementary. The remaining equalities are all straightforward, so for brevity, we prove only that $\delta_2^2=0$, and that $\delta_1\delta_4+\delta_4\delta_1=0$. Firstly, $\delta_2^2([\mathbf x_{1,\dots,d}|\mathbf t_{1,\dots,d}])$ is given by
\begin{equation}
\begin{aligned}
&\sum_{2\leq p\leq d}\delta_2[\mathbf x_{p,\dots,d}|\mathbf t_{p,\dots ,d}]\wedge[\mathbf x_{1,\dots,p-1}|\mathbf t_{1,\dots ,p-1}]-\sum_{2\leq p\leq d}[\mathbf x_{p,\dots,d}|\mathbf t_{p,\dots ,d}]\wedge\delta_2[\mathbf x_{1,\dots,p-1}|\mathbf t_{1,\dots,p-1}] \\
&=\sum_{2\leq p\leq d}\sum_{p+1\leq r\leq d}[\mathbf x_{r,\dots,d}|\mathbf t_{r,\dots ,d}]\wedge[\mathbf x_{p,\dots,r-1}|\mathbf t_{p,\dots ,r-1}]\wedge[\mathbf x_{1,\dots,p-1}|\mathbf t_{1,\dots,p-1}]\\
&-\sum_{2\leq p\leq d}\sum_{2\leq r\leq p-1}[\mathbf x_{p,\dots,d}|\mathbf t_{p,\dots ,d}]\wedge[\mathbf x_{r,\dots,p-1}|\mathbf t_{r,\dots ,p-1}]\wedge[\mathbf x_{1,\dots,r-1}|\mathbf t_{1,\dots ,r-1}]=0.
\end{aligned}
\end{equation}
Similarly, $\delta_1\delta_4([\mathbf x_{1,\dots,d}|\mathbf t_{1,\dots,d}])$ equals
\begin{equation}
\begin{aligned}
&\sum_{1\leq p<q\leq d}(-1)^{q-p}\delta_1[\mathbf x_{1,\dots,p\to q,\dots,d}|\mathbf t_{1,\dots ,p-1,q,\dots ,d}]\wedge[x_{q-1}^{-1},\dots,x_p^{-1}|t_q-\mathbf t_{q-1,\dots,p}]\\
&-(-1)^{q-p}[\mathbf x_{1,\dots ,p\to q,\dots ,d}|\mathbf t_{1,\dots ,p-1,q,\dots,d}]\wedge\delta_1[x_{q-1}^{-1},\dots,x_p^{-1}|t_q-\mathbf t_{q-1,\dots,p}],
\end{aligned}
\end{equation}
which equals
\begin{equation}
    \begin{aligned}
&=\sum _{1\leq p<q\leq d} (-1)^{q-p}[\mathbf x_{1,\dots,p\to q,\dots,d}|\mathbf t_{1,\dots ,p-1,q,\dots ,d}]\wedge\\
&\left(\sum_{1\leq r\leq p-1} t_r[x_r]_0+t_q[x_p\cdots x_q]_0+\sum_{q+1\leq r\leq d} t_r[x_r]_0\right)\wedge[x_{q-1}^{-1},\dots,x_p^{-1}|t_q-\mathbf t_{q-1,\dots,p}]\\
&-(-1)^{q-p}[\mathbf x_{1,\dots ,p\to q,\dots ,d}|\mathbf t_{1,\dots ,p-1,q,\dots,d}]\wedge[x_{q-1}^{-1},\dots,x_p^{-1}|t_q-\mathbf t_{q-1,\dots,p}]\wedge\left(\sum_{p\leq r\leq q-1}(t_q-t_r)[x_r^{-1}]_0\right)\\
&=-\sum _{1\leq p<q\leq d} (-1)^{q-p}[\mathbf x_{1,\dots,p\to q,\dots,d}|\mathbf t_{1,\dots,p-1,q,\dots,d}]\wedge[x_{q-1}^{-1},\dots,x_p^{-1}|t_q-\mathbf t_{q-1,\dots,p}]\wedge\left(\sum_{1\leq r\leq d} t_r[x_r]_0\right).
\end{aligned}
\end{equation}
Finally,
\begin{multline}
\delta_4\delta_1([\mathbf x_{1,\dots,d}|\mathbf t_{1,\dots,d}])=\delta_4[\mathbf x_{1,\dots,d}|\mathbf t_{1,\dots,d}]\wedge \left(\sum_{p=1}^dt_p[x_p]_0\right)=\\
=\sum_{1\leq p<q\leq d}(-1)^{q-p}[\mathbf x_{1,\dots,p\to q,\dots d}|\mathbf t_{1,\dots,p-1,q,\dots d}]\wedge[x_{q-1}^{-1},\dots,x_p^{-1}|t_q-\mathbf t_{q-1,\dots,p}]\wedge\left(\sum_{p=1}^dt_p[x_p]_0\right),\end{multline}
and it follows that $\delta_1\delta_4+\delta_4\delta_1=0$.
\end{proof}

\begin{remark} Theorem~\ref{thm:cobracket} also holds without requiring that the tuples are in $S_d(F)$ (same proof). The problem with arbitrary tuples arises when defining the relations; see Section~\ref{sec:NotAllSymbols}.
\end{remark}

\subsection{Terms involving zero or infinity}\label{sec:ZeroInfinity} We shall also define elements $[x_1,\dots,x_d]_{n_1,\dots,n_d}$ when some of the $x_i$ are $0$ or $\infty$, but we still require that all consecutive products $\prod_{r=i}^j x_r$ are well defined and not 1 ($\infty x_i=\infty\neq 1$; $0\infty$ is undefined). When some $x_i$ are 0, $[x_1,\dots,x_d]_{n_1,\dots,n_d}$ is defined to be zero (this is motivated by~\eqref{eq:ZeroLimit}). When some $x_i$ are $\infty$ the definition is more subtle and we refer to Section~\ref{sec:InversionRelations}. All we need for now is that $[\infty]_n=0$ for $n>1$ (and undefined for $n$=1).

\section{The relations}\label{sec:Relations}
We now define groups $R_n(F)$ of relations in $\mathbb L_n^{\symb}(F)$. We can then define 
\begin{equation}\mathbb L_n(F)=\mathbb L_n^{\symb}(F)/R_n(F).
\end{equation}
The definition is inductive starting with the definition of $R_1(F)$ to be the trivial group, so that $\mathbb L_1(F)=F^*$. Suppose $n>1$ and that $\mathbb L_k(K)$ has been defined for all fields $K$ and all $k<n$. Then 
\begin{equation}\wedge^2(\mathbb L(K))_n=\bigoplus_{k+l=n}\mathbb L_k(K)\wedge \mathbb L_l(K)
\end{equation}
is also well defined for all $K$. Let
\begin{equation}
\mathcal A_n(K)=\Ker\Big(\delta\colon\mathbb L_n^{\symb}(K)\to\wedge^2(\mathbb L(K))_n\Big)
\end{equation}
Mimicking Goncharov's definition of relations in the Bloch complexes~\cite[p.~221]{GoncharovConfigurations} we wish to define $R_n(F)\subset \mathbb L_n^{\symb}(F)$ to be generated by elements $\alpha(p)-\alpha(q)$, where $p$ and $q$ are points on a connected (geometrically irreducible) smooth curve $X$ over $F$ with function field $F(X)$ and $\alpha$ is an element in $\mathcal A_n(F(X))$. The only problem with this is our requirement that all tuples be well defined (allowing for $0$ and $\infty$; see Section~\ref{sec:ZeroInfinity}).

Each element $\alpha\in \mathbb L^{\symb}(F(X))$ is a linear combination of terms $[f_1,\dots,f_d]_{n_1,\dots,n_d}$, which we refer to as \emph{terms of $\alpha$}.
\begin{definition} We say that $\alpha\in\mathbb L^{\symb}(F(X))$ is \emph{well defined} at $x\in X$ if all products $\prod_{r=i}^jf_r(x)$ are defined and distinct from 1, for each term $[f_1,\dots,f_d]_{n_1,\dots,n_d}$ of $\alpha$.
\end{definition}
By induction, we see that if $\alpha\in\mathcal A_n(F(X))$, we can rearrange the terms $\delta(\alpha)$ as a linear combination of terms $(\beta(q)-\beta(q'))\otimes y$, where $Y$ is a smooth connected curve over $F(X)$, $q$ and $q'$ are points in $Y$, $\beta\in\mathcal A_k(F(X)(Y))$ and $y\in\mathbb L^{\symb}_{n-k}(F(X))$.

\begin{definition}
All $\alpha \in \mathcal A_2$ are \emph{proper}. For $n>2$, $\alpha\in \mathcal A_n(F(X))$ is \emph{proper} if whenever $\alpha$ is well defined at $x\in X$, there is an arrangement of $\delta(\alpha)$ as above with $\beta(q)$, $\beta(q')$, and $y$ all well defined at $x$ (no cancelation of undefined terms) with $\beta$ proper.
\end{definition}

\begin{definition} The group $R_n(F)$ is generated by elements of the form $\alpha(p)-\alpha(q)$, where $\alpha\in\mathcal A_n(F(X))$ is proper and well defined at $p,q\in X$.
\end{definition}



We now prove that $\mathbb L(F)$ is also a Lie coalgebra. To do this we must prove that the cobracket respects the relations. The proof is similar to~\cite[Lemma~1.16]{GoncharovConfigurations}.



\begin{theorem} The map $\delta\colon\mathbb L_n^{\symb}(F)\to \wedge^2(\mathbb L(F))_n$ takes $R_n(F)$ to 0.
\end{theorem}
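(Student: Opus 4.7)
The plan is induction on the weight $n$. The base case $n=2$ is immediate: since $R_1(F) = 0$, the cobracket $\delta$ on weight-$2$ symbols lands in $F(X)^* \wedge F(X)^*$, and the condition $\alpha \in \mathcal A_2(F(X))$ says this is zero. Because $\delta$ is defined by algebraic operations on the entries, specialization at $p$ and at $q$ (both possible by the well-definedness assumption) carries this zero to zero in $F^* \wedge F^*$, giving $\delta(\alpha(p) - \alpha(q)) = 0$.

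For the inductive step I will assume the theorem in all weights $k < n$, so that $\delta$ factors through a well-defined map $\mathbb L_k(F) \to \wedge^2(\mathbb L(F))_k$. I plan to prove the stronger statement that $\delta(\alpha(p)) = 0$ in $\wedge^2(\mathbb L(F))_n$, from which the parallel argument at $q$ yields the theorem. Proper-ness of $\alpha$ together with well-definedness at $p$ provides an arrangement
\[
\delta(\alpha) \;=\; \sum_i \bigl(\beta_i(r_i) - \beta_i(r_i')\bigr) \wedge y_i \qquad \text{in } \wedge^2(\mathbb L^{\symb}(F(X)))_n,
\]
where antisymmetry has been used to place every relation witness in the left slot, each $\beta_i \in \mathcal A_{k_i}(F(X)(Y_i))$ is proper, and $\beta_i(r_i)$, $\beta_i(r_i')$, and $y_i$ are all well-defined at $p$. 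Because $\delta$ commutes with specialization, evaluating at $p$ yields
\[
\delta(\alpha(p)) \;=\; \sum_i \bigl(\beta_i(p)(r_i(p)) - \beta_i(p)(r_i'(p))\bigr) \wedge y_i(p),
\]
and the game is to recognize each parenthesized difference as a generator of $R_{k_i}(F)$, so that it vanishes in $\mathbb L_{k_i}(F)$ by the inductive hypothesis, and hence the whole wedge expression vanishes in $\wedge^2(\mathbb L(F))_n$.

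The hard part, and the main obstacle, is verifying that the specialized data $(\beta_i(p), r_i(p), r_i'(p))$ really does furnish a generator of $R_{k_i}(F)$. Well-definedness of $\beta_i(p)$ at $r_i(p)$ and $r_i'(p)$ is forced by the arrangement-level well-definedness at $p$; membership of $\beta_i(p)$ in $\mathcal A_{k_i}(F(Y_i^{(p)}))$ follows by applying specialization to the identity $\delta(\beta_i) = 0$ together with the lower-weight case of the theorem; and proper-ness of $\beta_i(p)$ is inherited by specializing the witnessing arrangement of $\delta(\beta_i)$, which is a nested instance of the same recipe. The definition of \emph{proper} is engineered precisely to keep this chain of specializations alive, and together with a nested induction on the tower of $\beta$'s, the argument closes.
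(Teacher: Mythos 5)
Your proposal is correct and follows essentially the same route as the paper: reduce to showing $\delta(\alpha(p))=0$, use properness to arrange $\delta(\alpha)$ as a sum of terms $(\beta(q)-\beta(q'))\wedge y$, specialize to the fiber over $p$, and recognize each specialized difference as a generator of $R_k(F)$ via a nested induction establishing that the restricted $\beta$ remains in $\mathcal A_k$ and proper. Your write-up is in fact somewhat more explicit than the paper's about where well-definedness and the lower-weight inductive hypothesis enter.
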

\begin{proof}
It is enough to show that for any proper $\alpha\in\mathcal A_n(F(X))$, the element $\delta(\alpha(p))$ is zero in $\wedge^2(\mathbb L(F))_n$ for all $p\in X$ where $\alpha(p)$ is defined. Fix such $X$, $\alpha$, and $p$ and write $\delta(\alpha)$ in $\wedge^2(\mathbb L^{\symb}(F(X)))$ as a linear combination of terms $(\beta(q)-\beta(q'))\wedge y$ as above. It follows that $\delta(\alpha(p))$ is a linear combination of elements of the form $(\beta(q)(p)-\beta(q')(p))\otimes y(p)$, and the result follows by showing that $\beta(q)(p)-\beta(q')(p)$ is in $R_k(F)$. Let $\beta_p$ be the element in $\mathbb L_k^{\symb}(F(Y(p)))$ obtained from $\beta$ by restriction to the fiber $Y(p)$ over $p$. Since $\beta$ is in $\mathcal A_k(F(X)(Y)))$, an induction argument shows that $\beta_p$ is in $\mathcal A_k(F(Y(p))$ and is proper. Note that $q$ and $q'$ can be regarded as maps $X\to Y$, so restriction to $p\in X$ determines points $q_p$ and $q'_p$ in $Y(p)$. We then have
\begin{equation}
   \beta(q)(p)-\beta(q')(p)=\beta_p(q_p)-\beta_p(q_p)\in R_k(F).
\end{equation}
This concludes the proof.
\end{proof}

\subsection{Basic examples of relations}\label{sec:BasicExamples}

In the following examples we repeatedly use the fact that for $\alpha \in \mathbb L^{\symb}(F(x))$, $\delta(\alpha) = 0 \in\wedge^2(\mathbb L(F(x)))$ implies that $\alpha(x)$ is constant in $\mathbb L(F)$.

\begin{example}\label{ex:Depth1Inv}
Let $\alpha_n=2([x]_n+(-1)^n[x^{-1}]_n)\in\mathbb L^{\symb}_n(F)$. By induction, $\alpha_n\in\A_n(F(x))$ when $n\geq 2$, so $\alpha_n(x)$ is constant in $\mathbb L_n(F)$. Hence, $\alpha_n(x)=\alpha_n(0)=2[0]_n+2(-1)^n[\infty]_n=0$. Note, however, that $\alpha_1(x)=-2[x]_0$.
\end{example}

\begin{example}\label{ex:11} One easily shows that $\alpha=[x,y]_{1,1}+[x]_2-[\frac{x(1-y)}{1-xy}]_2$ is in $\A_2(F(x,y))$. Considering the specialization at $(x,y)=(0,y)$ it follows that $[x,y]_{1,1}+[x]_2-[\frac{x(1-y)}{1-xy}]_2=0\in\mathbb L_2(F)$ whenever all terms are defined. This means that every element in $\mathbb L_2(F)$ can be expressed using only terms in depth 1.
\end{example}

\begin{example}[Five term relation] Similarly, we obtain that \begin{equation}[x]_2+[y]_2-[xy]_2-[\frac{y(1-x)}{1-xy}]_2-[\frac{x(1-y)}{1-xy}]_2=0\in\mathbb L_2(F).
\end{equation}
\end{example}

\begin{proposition}
Modulo 2-torsion every element in $\mathbb L_3(F)$ can be written in terms of depth 1 symbols.
\end{proposition}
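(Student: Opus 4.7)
The strategy closely mirrors Example~\ref{ex:11}: for each symbol of depth $\geq 2$ in weight $3$, I would construct an element $\alpha\in\A_3(F(X))$ whose specialization at a boundary point $p\in X$ makes the generator vanish by Section~\ref{sec:ZeroInfinity}. Then $\alpha(q)-\alpha(p)\in R_3(F)$ for any other well-defined specialization $q$, forcing the generator at $q$ to equal a combination of depth-$1$ symbols in $\mathbb L_3(F)$. The ambiguity modulo $2$-torsion arises only from the inversion relations: Example~\ref{ex:Depth1Inv} at $n=2,3$ gives $[x^{-1}]_2\equiv-[x]_2$ and $[x^{-1}]_3\equiv[x]_3$ modulo $2$-torsion, and these are used whenever a reduction naturally produces inverted arguments.

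I would treat the two depth-$2$ generators $[x,y]_{2,1}$ and $[x,y]_{1,2}$ first. For $[x,y]_{2,1}$ the plan is to find coefficients $c_i\in\Z$ and rational functions $f_i(x,y)$ with $f_i(0,y)\in\{0,1,\infty\}$ such that
\[
\alpha(x,y)\;=\;[x,y]_{2,1}+\sum_i c_i\,[f_i(x,y)]_3\;\in\;\A_3(F(x,y));
\]
then $\alpha(0,y)=0$, giving the required depth-$1$ expression for $[x,y]_{2,1}$. A symmetric argument, together with the inversion identities at $n=3$, handles $[x,y]_{1,2}$. Concretely, such an $\alpha$ can be produced by lifting a classical functional equation that expresses $\Li_{2,1}(x,y)$ as a combination of trilogarithms to the symbolic setting, and verifying $\delta\alpha=0$ directly via the formula for $\delta[x_1,x_2]_{r,s}$ displayed in the examples above. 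The depth-$3$ generator $[x,y,z]_{1,1,1}$ is handled by the same template: find $\beta(x,y,z)=[x,y,z]_{1,1,1}+(\text{depth}\leq 2\ \text{corrections})\in\A_3(F(x,y,z))$ with $\beta(0,y,z)=0$, then reduce the remaining depth-$2$ pieces to depth $1$ modulo $2$-torsion by the preceding step.

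The main obstacle is the explicit construction of $\alpha$ and $\beta$. The constraint $\delta\alpha=0\in\wedge^2(\mathbb L(F(x,y)))$ amounts to matching the cobracket of $[x,y]_{2,1}$ --- which by $\delta_4$ contains the term $[x_1x_2]_1\wedge[x_1^{-1}]_2$ with an inverted second factor --- against the cobrackets of the correction terms $[f_i(x,y)]_3$; the inverted $[x_1^{-1}]_2$ can only be identified with $-[x_1]_2$ via the $n=2$ inversion relation, which holds modulo $2$-torsion. This is precisely why the identity in $\mathbb L_3(F)$ is valid only modulo $2$-torsion rather than on the nose. Once explicit $\alpha$ and $\beta$ are in hand, checking properness in the sense of Section~\ref{sec:Relations} and well-definedness at the chosen specialization point is a direct inspection of the inductive decomposition of $\delta\alpha$ and $\delta\beta$.
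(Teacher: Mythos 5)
Your strategy is exactly the paper's: for each depth $\geq 2$ generator in weight $3$, exhibit an element of $\A_3$ of the form (generator) $+$ (depth-$1$ corrections) whose cobracket vanishes, specialize at a point where one coordinate is $0$ to see that the constant is zero, and conclude that the generator equals minus the corrections in $\mathbb L_3(F)$. Your diagnosis of the $2$-torsion (the depth-$1$ inversion relation is needed to absorb the inverted factor $[x_1^{-1}]_2$ coming from $\delta_4$, and that relation only holds up to $2$-torsion) is also accurate. The gap is that the entire substance of the proof is the explicit exhibition of the correction terms, and you never produce them. The paper writes down
\[
[x,y]_{2,1}+[x]_3+[\tfrac{1-y}{1-xy}]_3+[xy]_3+[-\tfrac{xy}{1-xy}]_3-[1-y]_3-[\tfrac{x(1-y)}{1-xy}]_3
\]
and a nine-term analogue for $[x,y,z]_{1,1,1}$, and the verification that these lie in $\A_3$ modulo $2$-torsion (using the five-term relation in $\mathbb L_2$ and the low-weight inversion relations) is the actual content. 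Deferring this to ``lifting a classical functional equation that expresses $\Li_{2,1}$ in terms of trilogarithms'' assumes precisely what must be shown; within the paper's framework one must exhibit the element and check $\delta\alpha=0$ against the relations already established in weight $2$.

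Two smaller points. First, your requirement $f_i(0,y)\in\{0,1,\infty\}$ is both too strong and slightly off: in the paper's element the terms $[\tfrac{1-y}{1-xy}]_3$ and $-[1-y]_3$ specialize to $\pm[1-y]_3$ and cancel in pairs rather than vanishing individually, and a term with $f_i(0,y)=1$ would make $\alpha$ fail to be well defined at the specialization point (a symbol $[1]_3$ is excluded from $S_1(F)$), not equal to zero. Second, for $[x,y]_{1,2}$ the paper does not run a ``symmetric argument'' but instead observes that the shuffle-type element $[x,y]_{1,2}+[y,x]_{2,1}+[xy]_3$ has vanishing cobracket, which reduces $[x,y]_{1,2}$ to the already-treated $[y,x]_{2,1}$; your route would require yet another explicit functional equation that is likewise not supplied.
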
 
\begin{proof} A simple computation shows that the elements
\begin{multline}\label{eq:21InTermsOf3}
    [x,y]_{2,1}+[x]_3+[\frac{1-y}{1-xy}]_3+[xy]_3+[-\frac{xy}{1-xy}]_3-[1-y]_3-[\frac{x(1-y)}{1-xy}]_3
\end{multline}
and
\begin{multline}\label{eq:111InTermsOf3}
    [x,y,z]_{1,1,1}-[-\frac{y}{1-y}]_3+[\frac{1-x}{1-xyz}]_3-[xy]_3+[\frac{xy(1-z)}{1-xyz}]_3\\
-[1-x]_3+[-\frac{y(1-x)}{1-y}]_3+[-\frac{y(1-z)}{1-y}]_3-[-\frac{y(1-x)(1-z)}{(1-xyz)(1-y)}]_3
\end{multline}
have vanishing cobracket modulo 2-torsion. Specializing at $(0,y)$ and $(x,0,z)$, respectively, proves the result for $[x,y]_{2,1}$ and $[x,y,z]_{1,1,1}$. Finally, $[x,y]_{1,2}+[y,x]_{2,1}+[xy]_3$ also has vanishing cobracket, concluding the proof.
\end{proof}

\begin{remark} With more work one can show that (up to torsion) every element in $\mathbb L_4(F)$ can be expressed in terms of terms of the form $[x]_4$ and $[x,y]_{3,1}$. For example, one has 
\begin{equation}
    [x,y]_{2,2}=[y]_4+[xy]_4+[y,x]_{3,1}+[xy,x^{-1}]_{3,1}-[x,y]_{3,1}.
\end{equation}
Similar equations for $[x,y,z]_{2,1,1}$, $[x,y,z]_{1,2,1}$, $[x,y,z]_{1,1,2}$, and $[x,y,z,w]_{1,1,1,1}$ are more complicated, and we omit them.
\end{remark}

\subsection{The issue of arbitrary symbols}\label{sec:NotAllSymbols}
The following argument shows that not all symbols can be meaningfully defined. By Example~\ref{ex:11} we should have for any $a$
\begin{equation}
    [x,\frac{x-a}{x-ax}]_{1,1}+[x]_2-[a]_2=0.
\end{equation}
Setting $x=1$, we would thus get that $[1,1]_{1,1}+[1]_2=[a]_2$ for any $a$, which would imply that $\mathbb L_2(F)=0$.
\begin{remark} We believe that the reason for this is that $\L_{1,1}(x,y)$ does not have a limit as $x$ and $y$ tend to 1. One has similar issues with symbols such as $[x_1,0,\infty]_{1,1,1}$.
\end{remark}
\begin{remark} One can give meaning to some additional symbols, but these should always be expressible in terms of symbols in $S_d(F)$. For example, one may define $[x,x^{-1}]_{1,1}=-[x]_2$. We shall not pursue this here.
\end{remark}


\subsection{Conjectures and speculation}\label{sec:Speculation}

%



Conjecture~\ref{conj:RnAndLnGeneral} below is a natural generalization of Theorem~\ref{thm:RnAndLn}. Consider the map
\begin{equation}
    r\colon\mathbb L^{\symb}(\C)\to\R,\qquad [x_1,\dots,x_d]_{n_1,\dots,n_d}\to\L_{n_1,\dots,n_d}(x_1,\dots,x_d),\qquad [x]_0\mapsto \log(|x|).
\end{equation}
\begin{conjecture}\label{conj:RnAndLnGeneral}
If $\alpha\in\mathbb L(\C(t))$ is such that $\delta(\alpha)=0$, then $r(\alpha(t))=0$ is constant in $t$. 
\end{conjecture}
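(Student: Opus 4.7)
The plan is to induct on the weight $n$, in the spirit of Zagier's proof of Theorem~\ref{thm:RnAndLn}. The base case $n=1$ is trivial, since the cobracket is zero in weight one and $R_1$ is trivial. For the inductive step, pick a lift $\widetilde\alpha\in\mathbb L^{\symb}(\C(t))$ of $\alpha$; the hypothesis $\delta(\alpha)=0$ in $\wedge^2\mathbb L(\C(t))_n$ says that $\delta(\widetilde\alpha)$ lies in the subspace generated by $R_k(\C(t))\wedge\mathbb L^{\symb}_{n-k}(\C(t))$ and its $\wedge$-flip, for $k<n$. The aim is to show that $t\mapsto r(\widetilde\alpha(t))$ has vanishing derivative on the dense open subset $U\subset\C$ where every term is well defined; connectedness of $U$ then upgrades this to global constancy.

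The crucial ingredient is a \emph{differential identity} on $\mathbb L^{\symb}$ compatible with $\delta$. Concretely, one wants to establish a bilinear map $\mu\colon\wedge^2\mathbb L^{\symb}(\C(t))\to\Omega^1(U)$ satisfying
\begin{equation*}
d\bigl(r(X)\bigr)\;=\;\mu\bigl(\delta(X)\bigr)
\end{equation*}
for every $X\in\mathbb L^{\symb}(\C(t))$, where intuitively $\mu(Y\wedge Z)=r(Y)\,dr(Z)-r(Z)\,dr(Y)$, with $dr([w]_0)=d\log|w|$ and $dr([w]_1)=d\arg(w)$ (or vice versa, depending on parity). For depth one this is Zagier's identity $d\L_n(z)=\L_{n-1}(z)\,d\log|z|$ and its even-$n$ analogue. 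For higher depth one should derive it by differentiating the iterated-integral presentation of $\Li_{n_1,\ldots,n_d}$ and then applying the real single-valued projection, matching each of the resulting one-forms with one of the four summands $\delta_1,\delta_2,\delta_3,\delta_4$; Zhao's variation-matrix framework~\cite{ZhaoMultiplePolylogsMonodromyHodgeStructures} is designed to package precisely this data.

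Granting $d\circ r=\mu\circ\delta$, the induction closes as follows. Every $\rho\in R_k(\C(t))$ is a sum of differences $\beta(p)-\beta(q)$ with $\beta\in\A_k$ defined over a curve over $\C(t)$; applying the inductive hypothesis to $\beta$ (specialized at generic $t$) shows that $r(\rho(t))=0$ identically in $t$, and consequently $dr(\rho(t))=0$. Thus $\mu(\rho\wedge\gamma)=r(\rho)\,dr(\gamma)-r(\gamma)\,dr(\rho)=0$ for any $\gamma$, so $\mu$ annihilates both $R_k\wedge\mathbb L^{\symb}_{n-k}$ and its flip. Hence $\mu(\delta\widetilde\alpha)=0$, giving $d\bigl(r(\widetilde\alpha(t))\bigr)=0$ on $U$, as required.

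The main obstacle is establishing the differential identity $dr=\mu\circ\delta$. The depth-one case is classical, but in higher depth the real single-valued projection mixes holomorphic and antiholomorphic iterated integrals, and in particular the inverted arguments $x^{-1}_{q-1},\ldots,x^{-1}_p$ appearing in $\delta_4$ encode the antiholomorphic contribution in a non-obvious way; matching the four summands of $\delta$ term-by-term with the differential of $\L_{n_1,\ldots,n_d}$ is the delicate bookkeeping task. A cleaner conceptual route would be to factor $r$ through Brown's single-valued motivic period map, for which cobracket compatibility is structural; the problem would then reduce to identifying that restriction with the concrete $r$ defined here, which is a question of normalization rather than of combinatorics.
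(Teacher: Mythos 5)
This statement is labeled a \emph{Conjecture} in the paper, and the authors offer no proof --- only the remark that they have verified it numerically in weight $\leq 4$. So there is no proof in the paper to compare against, and the real question is whether your proposal closes the gap. It does not: the entire content of the conjecture is concentrated in the ``crucial ingredient'' you defer, namely the differential identity $d(r(X))=\mu(\delta(X))$. In depth one this is Zagier's classical computation underlying Theorem~\ref{thm:RnAndLn}, but in higher depth it is genuinely open in the form you need. Note also that your ``intuitive'' formula $\mu(Y\wedge Z)=r(Y)\,dr(Z)-r(Z)\,dr(Y)$ cannot be literally correct even in depth one: for the Bloch--Wigner function one has $dD(z)=\log|z|\,d\arg(1-z)-\log|1-z|\,d\arg(z)$, so the second factor in each product is not $dr$ of anything in the image of $r$ (e.g.\ $r([w]_1)=-\log|1-w|$, whereas the needed one-form is $d\arg(1-w)$). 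The map $\mu$ must therefore involve a companion ``imaginary part'' of $r$, and verifying that the resulting one-form matches all four summands $\delta_1,\dots,\delta_4$ --- in particular the inverted arguments in $\delta_4$ --- is exactly the unproved combinatorial/analytic core. Asserting that Zhao's variation matrices or Brown's single-valued period map ``are designed to package precisely this data'' is a research program, not a proof step.

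There is a second, more repairable gap in the induction. The relations $R_k(\C(t))$ are generated by differences $\beta(p)-\beta(q)$ with $\beta\in\A_k(\C(t)(Y))$ for a curve $Y$ over $\C(t)$; to conclude $r(\rho(t))\equiv 0$ for $\rho\in R_k(\C(t))$ you apply the inductive hypothesis to $\beta$, but $\beta$ lives over a field of transcendence degree two over $\C$, whereas the conjecture as stated (and as you induct on it) concerns $\C(t)$ only. You would need to strengthen the statement to function fields of arbitrary varieties over $\C$ (constancy of $r$ along all algebraic families), and also address where the specializations $\beta(q)(t)$, $\beta(q')(t)$ are actually well defined, which is the role the ``proper'' condition plays in the paper's definition of $R_n$. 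Finally, your base case should be weight $2$, where the claim is Bloch's theorem on the five-term relation for $D(z)$, not the trivial weight-$1$ statement. The overall architecture (induct on weight, kill $\delta\widetilde\alpha$ term by term using lower-weight constancy, integrate) is the right and expected one --- it is the natural generalization of Zagier's argument --- but as written the proposal reduces the conjecture to an equally hard unproved lemma rather than proving it.
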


\begin{remark} We have verified~Conjecture~\ref{conj:RnAndLnGeneral} numerically for many examples in weight 4 and lower.
\end{remark}

We also speculate more boldly that $\mathbb L(F)$ is rationally isomorphic to the motivic Lie coalgebra, so that the weight $n$ part of the complex
\begin{equation}
     \cxymatrix{@C=4em{\mathbb L(F)\ar[r]^-\delta&\wedge^2(\mathbb L(F))\ar[r]^-{\delta\wedge \id - \id\wedge\delta}&\wedge^3(\mathbb L(F))\ar[r]&\dots}}
\end{equation}
should compute motivic cohomology, rationally. For a finite field $F_q$ with $q$ elements the only motivic cohomology groups are
\begin{equation}
    H^1_{\mathcal M}(F_q,\Z(n))=K_{2n-1}(F_q)=\Z\big/(q^n-1)\Z.
\end{equation}
Assuming that $R_2(F_q)$ is generated by five term relations and that $R_3(F_q)$ is generated by the 31 term relations in~\cite{Zickert_HolomorphicPolylogarithmsAndBlochComplexes}, one can compute $H^1(\bigwedge^*(\mathbb L(F_q))_n)$ for $n=2$ and $3$. When $q>3$, experimental evidence suggests that \emph{integrally}, one has 
\begin{equation}
    |H^1(\textstyle\bigwedge\nolimits^*\displaystyle\big(\mathbb L(F_q)\big)_n)|=_{2,3}\frac{|H^1_{\mathcal M}(F_q,\Z(n))|}{|F_q^*|}
\end{equation}
where $=_{2,3}$ means equality up to small powers of 2 and 3. The equality appears to hold on the nose when $n=2$.

\section{Inversion relations}\label{sec:InversionRelations}
In the following we ignore 2-torsion. It is well known (see e.g.~\cite{PolylogBook}) that the classical polylogarithms satisfy the \emph{inversion relation}
\begin{equation}\label{eq:Depth1Inv}
\Li_n(x)+(-1)^{n}\Li_n(x^{-1})=-\frac{(2\pi i)^n}{n!}B_n\left(\frac{\log x}{2\pi i}\right),
\end{equation}
where $B_n(x)=\sum_{k=0}^n\binom{n}{k}B_{n-k}x^k$ are the Bernoulli polynomials. Thinking of a symbol $[x]_n$ as $\Li_n(x)$ modulo products and powers of $\pi i$, we would thus expect $[x]_n+(-1)^n[x^{-1}]_n$ to be zero in $\mathbb L_n(F)$. This was proved in Example~\ref{ex:Depth1Inv}.

Goncharov~\cite[Sec.~2.6]{Goncharov_MultiplePolylogarithmsAndMixedTateMotives} extended the classical inversion formula to multiple polylogarithms. For example, one has
\begin{equation}
\begin{aligned}
\Li_{n_1,n_2}(x_1,x_2)+(-1)^{n_1+n_2}\Li_{n_2,n_1}(x_2^{-1},x_1^{-1})+(-1)^{n_1}\Li_{n_1}(x_1^{-1})\Li_{n_2}(x_2)\\
+\sum_{p+q=n_1}\frac{(2\pi i)^p}{p!}(-1)^q\binom{q+n_2-1}{n_2-1}B_p\left(\frac{\log(x_1x_2)}{2\pi i}\right)\Li_{q+n_2}(x_2)\\
+\sum_{p+q=n_2}\frac{(2\pi i)^p}{p!}(-1)^{n_1}\binom{n_1+q-1}{n_1-1}B_p\left(\frac{\log(x_1x_2)}{2\pi i}\right)\Li_{n_1+q}(x_1^{-1})=0,
\end{aligned}
\end{equation}
which suggests that we should have
\begin{multline}\label{eq:InvDept2}
[x_1,x_2]_{n_1,n_2}+(-1)^{n_1+n_2}[x_2^{-1},x_1^{-1}]_{n_2,n_1}\\
+(-1)^{n_1}\binom{n_1+n_2-1}{n_2-1}[x_2]_{n_1+n_2}
+(-1)^{n_1}\binom{n_1+n_2-1}{n_1-1}[x_1^{-1}]_{n_1+n_2}=0\in \mathbb L_{n_1+n_2}(F).
\end{multline}
More generally, Goncharov's work suggests that $[x_d^{-1},\dots,x_1^{-1}]_{n_d,\dots,n_1}$ should be expressible using terms where no $x_i$ is inverted. Inspired by Goncharov's work we make the following definition.

\begin{definition} The \emph{inversion map} is defined inductively on power series by the formula
\begin{multline}\label{eq:InvRel}
\inv([\mathbf x_{1,\dots,d}|\mathbf t_{1,\dots,d}])=-(-1)^d[\mathbf x_{1,\dots,d}|\mathbf t_{1,\dots,d}]+\frac{(-1)^d}{t_1}[\mathbf x_{2,\dots,d}|\mathbf t_{2,\dots,d}]\\-\frac{(-1)^d}{t_1}[\mathbf x_{2,\dots,d}|\mathbf t_{2,\dots,d}-t_1]-\frac{1}{t_d}\inv([\mathbf x_{1,\dots,d-1}|\mathbf t_{1,\dots,d-1}])+\frac{1}{t_d}\inv([\mathbf x_{1,\dots,d-1}|\mathbf t_{1,\dots,d-1}-t_d]).
\end{multline}
The induction starts with the formula $\inv[x|t]=[x|t]+[x]_0$.
\end{definition}

We wish to prove the following:
\begin{theorem}\label{thm:InvRel} We have $[\mathbf x^{-1}_{d,\dots,1}|-\mathbf t_{d,\dots,_1}]=\inv[\mathbf x_{1,\dots,d}|\mathbf t_{1,\dots,d}]\in\mathbb L(F)$. In particular,
\begin{equation}
(-1)^d(-1)^{n_1+\dots+n_d}[x_d^{-1},\dots,x_1^{-1}]_{n_d,\dots,n_1}=\inv[x_1,\dots,x_d]_{n_1,\dots,n_d}.
\end{equation}
\end{theorem}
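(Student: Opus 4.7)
The plan is induction on the depth $d$. The base case $d=1$ follows directly from Example~\ref{ex:Depth1Inv}: comparing coefficients of $t^{n-1}$ on both sides of $[x^{-1}|-t]=[x|t]+[x]_0$ gives $[x]_n+(-1)^n[x^{-1}]_n=0$ for $n\geq 2$ (exactly Example~\ref{ex:Depth1Inv}) and the $F^*$-identity $[x^{-1}]_1=[x]_1+[x]_0$ for $n=1$ (valid modulo $2$-torsion via $[-1]_0=0$).

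For the inductive step, assume the theorem in depth less than $d$ and set
\[
\alpha_d := [\mathbf x^{-1}_{d,\dots,1}|-\mathbf t_{d,\dots,1}] - \inv[\mathbf x_{1,\dots,d}|\mathbf t_{1,\dots,d}].
\]
Substituting the hypothesis into the two $\inv[\mathbf x_{1,\dots,d-1}|\cdot]$ occurrences on the right of \eqref{eq:InvRel} rewrites $\alpha_d$ as an explicit linear combination of inverted and non-inverted generating series of depth $\leq d$. The goal is to show $\alpha_d \in R_n(F)$ in every weight, which I would realize by letting $x_d$ vary on a rational curve and verifying that $\alpha_d$ defines an element of $\mathcal A_n(F(x_d))$ which is proper and vanishes at a suitable specialization.

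The main technical step is the cobracket computation $\delta(\alpha_d)=0$ in $\wedge^2(\mathbb L(F(x_d)))$. Expanding $\delta=\delta_1+\delta_2+\delta_3+\delta_4$ on each summand of $\alpha_d$, every wedge factor produced by $\delta_2,\delta_3,\delta_4$ has depth strictly less than $d$, so the induction hypothesis converts every inverted depth-$(<d)$ tuple into an $\inv$ of a non-inverted one; the shifts $\mathbf t-t_p$ and $t_q-\mathbf t$ arising from $\delta_3,\delta_4$ on the LHS then align, via the reindexing $i\mapsto d+1-i$, $t_i\mapsto -t_{d+1-i}$, $x_i\mapsto x^{-1}_{d+1-i}$, with the $\tfrac{1}{t_1}$- and $\tfrac{1}{t_d}$-paired telescoping terms on the RHS of \eqref{eq:InvRel}, producing term-by-term cancellation. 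The $\delta_1$ contributions have one factor remaining at depth $d$; using $[x_p^{-1}]_0=-[x_p]_0$ on the inverted side together with the leading $-(-1)^d[\mathbf x|\mathbf t]$ of \eqref{eq:InvRel}, they combine to $\alpha_d\wedge\sum_p t_p[x_p]_0$ plus lower-depth pieces, which are killed by the induction hypothesis and a self-referential grouping. Once $\delta(\alpha_d)=0$ is established, I would specialize at $x_d\to 0$, using Section~\ref{sec:ZeroInfinity} to kill the RHS and a depth-$(d-1)$ case of the theorem for what survives; the $x_d^{-1}\to\infty$ on the LHS is handled either by a paired boundary point or by bootstrapping the $\infty$-symbol definitions from the freshly established lower-depth cases.

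The main obstacle is the combinatorial bookkeeping in the $\delta$-verification: \eqref{eq:InvRel} encodes four telescoping pairs whose $\delta$-images must match, under the inversion reindexing, the $\delta_3,\delta_4$ contractions of the inverted depth-$d$ symbol, and this matching is essentially the combinatorial heart of why Goncharov's depth-$d$ inversion formula takes its particular form. A secondary difficulty is the self-referential nature of the $\delta_1$ piece — which forces the induction to be interleaved carefully between depths $d-1$ and $d$ — together with the $\infty$-limits appearing at the specialization on the inverted side.
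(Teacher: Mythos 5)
Your proposal follows essentially the same route as the paper: induct on depth and weight, show that the cobracket of $[\mathbf x^{-1}_{d,\dots,1}|-\mathbf t_{d,\dots,1}]-\inv[\mathbf x_{1,\dots,d}|\mathbf t_{1,\dots,d}]$ vanishes by converting the lower-depth inverted wedge factors via the induction hypothesis and matching them against the telescoping terms of \eqref{eq:InvRel}, conclude constancy, and specialize at zero where Definition~\ref{def:InfinityTerms} makes the identity hold. The paper packages the term-matching through the symbolic operator $\INV$ and the identity \eqref{eq:Claim}, and specializes all $x_i$ to $0$ at once, but these are presentational differences rather than a different argument.
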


\begin{example}
For $d=1$, we have $(-1)^{n+1}[x^{-1}]_n=[x]_n$ for $n>1$ and $[x^{-1}]_1=[x]_1+[x]_0$. These hold by Example~\ref{ex:Depth1Inv}.
\end{example}

\begin{example}
For $d=2$, $\inv[x_1,x_2|t_1,t_2]$ equals
\begin{equation}
-[x_1,x_2|t_1,t_2]+\frac{1}{t_1}[x_2|t_2]-\frac{1}{t_1}[x_2|t_2-t_1]-\frac{1}{t_2}\inv[x_1|t_1]+\frac{1}{t_2}\inv[x_1|t_1-t_2].
\end{equation}
The coefficient $t_1^{n_1-1}t_2^{n_2-1}$ of the equation in Theorem~\ref{thm:InvRel}, gives \eqref{eq:InvDept2}.
\end{example}

Recall that the proof that $[x]_n+(-1)^n[x^{-1}]_n=0$ used that the symbol $[\infty]_n$ is well defined and equal to 0. Unfortunately, $\L_{n_1,\dots,n_d}(x_1,\dots,x_d)$ does not tend to 0 when some of the $x_i$ tend to $\infty$, so we do not expect to be able to simply declare $[x_1,\dots,x_d]_{n_1,\dots,n_d}$ to be 0 when some of the $x_i$ are $\infty$. Instead we take the equality in Theorem~\ref{thm:InvRel} as a \emph{definition}.
\begin{definition}\label{def:InfinityTerms}
If some of the $x_i$ are $\infty$, we \emph{define}
\begin{equation}[x_1,\dots,x_d]_{n_1,\dots,n_d}=(-1)^d(-1)^{n_1+\dots+n_d}\inv([x_d^{-1},\dots,x_1^{-1}]_{n_1,\dots,n_d})
\end{equation}
when $n_1+\cdots+n_d>1$. 
\end{definition}
For example we have
\begin{equation}
[x_1,\infty]_{n_1,n_2}=(-1)^{n_2}\binom{n_1+n_2}{n_1-1}[x_1]_{n_1+n_2},\quad  [\infty,x_2]_{n_1,n_2}=-(-1)^{n_1}\binom{n_1+n_2}{n_2-1}[x_2]_{n_1+n_2}
\end{equation}
\begin{remark}
In weight 1, symbols $[\infty]_1$ and $[\infty]_0$ are not defined.
\end{remark}

\begin{proof}[Proof of Theorem~\ref{thm:InvRel}]
The structure of the proof is similar to that of the depth 1 inversion relation in Example~\ref{ex:Depth1Inv}. Suppose by induction that it holds in depth less than $d$ and in weight less than $n$. The case of depth 1 is the relation $[x]_n+(-1)^n[x^{-1}]_n=0$. For $X=[\mathbf x_{1,\dots,d}|\mathbf t_{1,\dots,d}]$ let 
\begin{equation}\label{eq:inversion}
    X^{-1}=[\mathbf x^{-1}_{d,\dots,1}|-\mathbf t_{d,\dots,1}].
\end{equation}
We must prove that $X^{-1}=\inv(X)$. We first prove by induction that $\delta(X^{-1})_n=\delta(\inv X)_n$, which implies that $X^{-1}-\inv(X)$ is constant. Setting $x_1,\dots,x_d$ equal to zero then gives the result by Definition~\ref{def:InfinityTerms}. In order to prove that $\delta(X^{-1})_n=\delta(\inv(X))_n$, we consider all possible terms involved. It will be convenient to think of the $x_i$ appearing in each term as formal variables, and not elements in $F$. It thus makes sense to talk about \emph{inverted terms} (those that involve inverses $x_i^{-1}$) and \emph{regular terms} (those that don't involve inverses). Any identity in the free abelian group on terms (or their wedge products) gives rise to an identity in $\mathbb L^{\symb}(F)$ (or $\wedge^2(\mathbb L^{\symb}(F))$) by assigning values to the $x_i$. We shall consider three distinct operations on terms which we all think of as inversions:
\begin{equation}
    \inv,\qquad X\mapsto X^{-1},\qquad \INV,
\end{equation}
where $\inv$ is defined by~\eqref{eq:InvRel}, $X\mapsto X^{-1}$ by~\eqref{eq:inversion}, and $\INV$ by fixing regular terms and replacing an inverted term $X^{-1}$ by $\inv(X)$. Letting
\begin{equation}\label{eq:ABCDdef}
\begin{aligned}
   &A=[\mathbf x_{2,\dots,d}|\mathbf t_{2,\dots,d}]& B=[\mathbf x_{2,\dots,d}|\mathbf t_{2,\dots,d}-t_1],\\
   &C=[\mathbf x_{1,\dots,d-1}|\mathbf t_{1,\dots,d-1}], &D=[\mathbf x_{1,\dots,d-1}|\mathbf t_{1,\dots,d-1}-t_d].
   \end{aligned}
\end{equation}
we claim that the following holds in $\mathbb L^{\symb}(F)$:
\begin{equation}\label{eq:Claim}
    \INV\big(\delta(X^{-1})+(-1)^d\delta(X)\big)=\INV\delta\Big(\frac{(-1)^d}{t_1}A-\frac{(-1)^d}{t_1}B-\frac{1}{t_d}C^{-1}+\frac{1}{t_d}D^{-1}\Big).
\end{equation}
Since the cobracket of $X$ consists entirely of terms of lower depth and weight, where $Y^{-1}=\inv(Y)$ by induction, it follows that one has 
\begin{multline}
    \delta(X^{-1})_n=\delta\Big(-(-1)^dX+\frac{(-1)^d}{t_1}A-\frac{(-1)^d}{t_1}B-\frac{1}{t_d}C^{-1}+\frac{1}{t_d}D^{-1}\Big)_n=\\\delta\Big(-(-1)^dX+\frac{(-1)^d}{t_1}A-\frac{(-1)^d}{t_1}B-\frac{1}{t_d}\inv(C)+\frac{1}{t_d}\inv(D)\Big)_n=\delta\inv(X)_n
\end{multline}
in $\mathbb L_n(F)$. It remains to prove~\eqref{eq:Claim}.
Using the shorthands 
\begin{equation}
\begin{aligned}
Y_{r,s}=[\mathbf x_{r,\dots,s}|\mathbf t_{r,\dots,s}]&,\quad Y_{r,s;u}=[\mathbf x_{r,\dots,s}|\mathbf t_{r,\dots,s}-t_u] \\
Y_{r,s}^{p\to q}=[\mathbf x_{r,\dots,p\to q,\dots,s}|\mathbf t_{r,\dots,p,q+1,\dots,s}]&,\quad Y_{r,s;u}^{p\to q}=[\mathbf x_{r,\dots,p\to q,\dots,s}|\mathbf t_{r,\dots,p,q+1,\dots,s}-t_u] \\
Z_{r,s}^{p\to q}=[\mathbf x_{r,\dots,p\to q,\dots,s}|\mathbf t_{r,\dots,p-1,q,\dots,s}]&,\quad Z_{r,s;u}^{p\to q}=[\mathbf x_{r,\dots,p\to q,\dots,s}|\mathbf t_{r,\dots,p-1,q,\dots,s}-t_u]
\end{aligned}
\end{equation}
we have
\begin{equation}\label{eq:DeltaXInvPlusX}
\begin{aligned}
    \delta(X^{-1})+(-1)^d\delta(X)=& \left(X+(-1)^dX^{-1}\right)\wedge\sum_{p=1}^dt_p[x_p]_0+\\
    &\sum_{2\leq p\leq d}(-1)^p\left(Y_{p,d}+(-1)^{d-p+1}{Y^{-1}_{p,d}}\right)\wedge{Y^{-1}_{1,p-1}}+\\&\sum_{2\leq p\leq d}Y_{p,d}\wedge\left(Y_{1,p-1}+(-1)^{p-1}{Y^{-1}_{1,p-1}}\right)+\\&\sum_{1\leq p<q\leq d}\left(Y_{1,d}^{p\to q}+(-1)^{d-q+p}{(Y_{1,d}^{p\to q})}^{-1}\right)\wedge Y_{p+1,q;p}+\\&\sum_{1\leq p<q\leq d}(-1)^{q-p}\left(Z_{1,d}^{p\to q}+(-1)^{d-q+p}{(Z_{1,d}^{p\to q})}^{-1}\right)\wedge{Y^{-1}_{p,q-1;q}}.
    \end{aligned}
\end{equation}
For any regular term $W$ we can define $A_W$, $B_W$, $C_W$, and $D_W$ as in~\eqref{eq:ABCDdef}. If $W$ has depth $d_W$ the definitions of $\INV$ and $\inv$ imply that
\begin{equation}
    \INV\big(W^{-1}+(-1)^{d_W}W\big)=\INV\Big(\frac{(-1)^{d_W}}{t_1}A_W-\frac{(-1)^{d_W}}{t_1}B_W-\frac{1}{t_{d_W}}C_W^{-1}+\frac{1}{t_{d_W}}D_W\Big).
\end{equation}
Plugging this into~\ref{eq:DeltaXInvPlusX} it is now straightforward to match up the terms with those on the righthand side of~\eqref{eq:Claim}. This concludes the proof.
\end{proof}

\begin{remark} Even if we don't allow any symbols with $x_i=\infty$ we still expect Theorem~\ref{thm:InvRel} to hold, but only up to torsion. For example, one can show that $6([x]_2+[x^{-1}]_2)=0\in\mathbb L_2(F)$.
\end{remark}

\subsection{A coalgebra without inverted terms}\label{sec:AltCobracket}
As an aside, we now show that if we change $\delta$ by replacing each term $[x_{q-1}^{-1},\dots,x_p^{-1}|t_q- \mathbf t_{p,\dots,q-1}]$ by $\inv[\mathbf x_{p,\dots,q-1}|\mathbf t_{p,\dots,q-1}-t_q]$, we obtain another cobracket $\delta'$ on $\mathbb L^{\symb}(F)$. Moreover, the groups $R_n(F)$ defined using $\delta$ or $\delta'$ are the same. All that remains is to show that $\delta'^2=0$. As in the proof of Theorem~\ref{thm:InvRel} the argument is purely symbolic, and makes use of the operation $\INV$. 
\begin{lemma}\label{eq:INVdelta} For a regular term $X$ we have
\begin{equation}
    \INV\delta(X)=\delta'(X),\qquad \INV\delta(X^{-1})=\delta'(\inv(X)).
\end{equation}
\end{lemma}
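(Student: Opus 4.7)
The plan is to prove the two identities in tandem, by induction on the depth $d$ of $X$. The first identity is essentially a matter of unwinding definitions, and the second reduces to the Claim~\eqref{eq:Claim} established in the proof of Theorem~\ref{thm:InvRel}, combined with the first identity and the inductive hypothesis.

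For the first identity, I would note that $\delta = \delta_1 + \delta_2 + \delta_3 + \delta_4$, and that $\delta_1, \delta_2, \delta_3$ applied to a regular term $X$ produce only regular terms (which $\INV$ fixes) and that these three pieces appear identically in $\delta'$. The only difference between $\delta$ and $\delta'$ lies in the $\delta_4$ piece: $\delta_4(X)$ contains wedge factors $[x_{q-1}^{-1},\dots,x_p^{-1}|t_q-\mathbf t_{q-1,\dots,p}]$, which are inverted terms. By the defining action of $\INV$, each such factor is replaced by $\inv[\mathbf x_{p,\dots,q-1}|\mathbf t_{p,\dots,q-1}-t_q]$, and this is precisely the substitution used to define $\delta'_4$. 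Hence $\INV\delta(X) = \delta'(X)$.

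For the second identity, the key point is that $\inv(X)$, after unwinding the recursive definition \eqref{eq:InvRel} using the inductive hypothesis that $\inv$ of any depth $<d$ symbol is expressible as a linear combination of regular terms, is itself a linear combination of regular terms. Therefore the first identity applies term-by-term to $\inv(X)$, giving $\delta'(\inv(X)) = \INV\delta(\inv(X))$. So it suffices to show $\INV\delta(X^{-1}) = \INV\delta(\inv(X))$. Now the Claim~\eqref{eq:Claim} in the proof of Theorem~\ref{thm:InvRel} can be rewritten, using $A, B, C, D$ from~\eqref{eq:ABCDdef}, as
\begin{equation*}
\INV\delta(X^{-1}) = \INV\delta\Big(-(-1)^d X + \frac{(-1)^d}{t_1}A - \frac{(-1)^d}{t_1}B - \frac{1}{t_d}C^{-1} + \frac{1}{t_d}D^{-1}\Big),
\end{equation*}
whereas $\inv(X) = -(-1)^d X + \frac{(-1)^d}{t_1}A - \frac{(-1)^d}{t_1}B - \frac{1}{t_d}\inv(C) + \frac{1}{t_d}\inv(D)$. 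Thus the matching reduces to checking $\INV\delta(C^{-1}) = \INV\delta(\inv(C))$ and the same with $D$ in place of $C$. But $C$ and $D$ have depth $d-1$, so by the inductive hypothesis $\INV\delta(C^{-1}) = \delta'(\inv(C))$, and by the first identity applied to the regular combination $\inv(C)$ we also have $\INV\delta(\inv(C)) = \delta'(\inv(C))$. The two sides therefore coincide, and likewise for $D$, yielding $\INV\delta(X^{-1}) = \INV\delta(\inv(X)) = \delta'(\inv(X))$.

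The main obstacle is bookkeeping: the Claim~\eqref{eq:Claim} has already been established in the proof of Theorem~\ref{thm:InvRel} via a direct (purely symbolic) matching of all the terms in~\eqref{eq:DeltaXInvPlusX} against the cobracket of the depth-$(d-1)$ correction, so no additional combinatorial work is required here. The only subtlety is to justify that applying $\delta'$ to a formal rational linear combination of regular terms commutes with the reduction using the first identity; this is immediate because $\delta'$ and $\INV\delta$ agree on each regular generator and both extend linearly. With those pieces in place the induction closes.
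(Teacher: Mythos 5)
Your proposal is correct and follows essentially the same route as the paper, which disposes of the first identity as immediate from the definition of $\delta'$ and obtains the second by induction on depth from the Claim~\eqref{eq:Claim}; your write-up simply fills in the details of that induction. The only (very minor) omission is an explicit check of the depth-one base case $\INV\delta([x^{-1}|-t])=\delta'([x|t]+[x]_0)$, which the paper also leaves to the reader.
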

\begin{proof}
The first equality follows immediately from the definition of $\delta'$. The second holds in depth 1, and the general case follows by induction from~\eqref{eq:Claim}.
\end{proof}

\begin{theorem}
$\delta'$ is a cobracket on $\mathbb L^{\symb}(F)$, i.e. $\delta'^2=0$.
\end{theorem}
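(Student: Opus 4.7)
The plan is to transport the identity $\delta^2=0$ (Theorem~\ref{thm:cobracket}) to $\delta'^2=0$ via the symbolic operator $\INV$, using Lemma~\ref{eq:INVdelta} as the bridge. First I would extend $\INV$ factorwise to wedge products, setting $\INV(Y_1\wedge\cdots\wedge Y_k)=\INV(Y_1)\wedge\cdots\wedge\INV(Y_k)$ and extending linearly. This is well-defined because every wedge summand appearing in the image of $\delta$ is a wedge of terms that are each individually regular or inverted.

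The key observation to establish is the single uniform termwise identity
$$\delta'(\INV(W)) = \INV(\delta(W))$$
for every term $W$, regular or inverted. This packages both cases of Lemma~\ref{eq:INVdelta} at once: if $W$ is regular then $\INV(W)=W$ and the first equality of the lemma applies; if $W=V^{-1}$ is inverted then $\INV(W)=\inv(V)$ and the second equality is exactly what is needed. Extending termwise across wedges using the Leibniz extension $d_2=\delta'\wedge\id-\id\wedge\delta'$ then upgrades this to
$$d_2\circ\INV \;=\; \INV\circ(\delta\wedge\id-\id\wedge\delta)$$
as maps $\wedge^2(\mathbb L^{\symb}(F))\to\wedge^3(\mathbb L^{\symb}(F))$.

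With this commutativity in hand, the conclusion is immediate. In weight one, $\delta'=0$ by definition, so there is nothing to check. For a regular term $X$ of weight at least two, Lemma~\ref{eq:INVdelta} gives $\delta'(X)=\INV\delta(X)$, whence
$$\delta'^2(X) \;=\; d_2\bigl(\INV\delta(X)\bigr) \;=\; \INV\bigl((\delta\wedge\id-\id\wedge\delta)\delta(X)\bigr) \;=\; \INV(\delta^2 X) \;=\; 0.$$
Since regular terms generate $\mathbb L^{\symb}(F)$, this yields $\delta'^2=0$.

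The substantive content, and hence the main obstacle, is the uniform identity $\delta'\circ\INV=\INV\circ\delta$ on single terms; once that is in place, the rest is a formal diagram chase that bootstraps the old cocycle identity rather than redoing the case analysis on $\delta_1,\ldots,\delta_4$. This mirrors the strategy already used inside the proof of Theorem~\ref{thm:InvRel}, where the combinatorial heavy lifting is carried out by equation~\eqref{eq:Claim}, of which Lemma~\ref{eq:INVdelta} is essentially the packaged form.
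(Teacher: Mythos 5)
Your proposal is correct and follows essentially the same route as the paper: the paper likewise writes $\delta(X)=\sum_i X^{(i)}_1\wedge X^{(i)}_2+\sum_j X^{(j)}_3\wedge (X^{(j)}_4)^{-1}$, applies $\INV$ to $\delta^2(X)=0$ via the Leibniz rule, and uses both cases of Lemma~\ref{eq:INVdelta} to convert each factor into $\delta'$ applied to the corresponding factor of $\delta'(X)$. Your packaging of the two cases of the lemma as the single identity $\delta'\circ\INV=\INV\circ\delta$ on terms, and the factorwise extension of $\INV$ to wedges, are just a cleaner formulation of exactly the computation the paper performs.
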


\begin{proof}
Let's write $\delta(X)$ in the form $\sum_{i}X^{(i)}_1\wedge X^{(i)}_2+\sum_{j}X^{(j)}_3\wedge{X^{(j)}_4}^{-1}$. By Lemma~\ref{eq:INVdelta} we have
\begin{equation}
\begin{aligned}
0&=\INV\delta^2(X)=\INV\delta\left(\sum_{i}X^{(i)}_1\wedge X^{(i)}_2+\sum_{j}X^{(j)}_3\wedge{X^{(j)}_4}^{-1}\right)\\
&=\sum_{i}\INV\delta X^{(i)}_1\wedge X^{(i)}_2-X^{(i)}_1\wedge\INV\delta X^{(i)}_2+\sum_{j}\INV\delta X^{(j)}_3\wedge\inv X^{(j)}_4-X^{(j)}_3\wedge\INV\delta {X^{(j)}_4}^{-1}\\
&=\sum_{i}\delta' X^{(i)}_1\wedge X^{(i)}_2-X^{(i)}_1\wedge\delta' X^{(i)}_2+\sum_{j}\delta' X^{(j)}_3\wedge\inv X^{(j)}_4-X^{(j)}_3\wedge\delta'\inv X^{(j)}_4\\
&=\delta'\left(\sum_{i}X^{(i)}_1\wedge X^{(i)}_2+\sum_{j}X^{(j)}_3\wedge\inv X^{(j)}_4\right)=\delta'^2(X).
\end{aligned}
\end{equation}
This proves the result.
\end{proof}

\section{Shuffle relations}\label{sec:ShuffleRelations}
Since the multiple polylogarithms are iterated integrals they satisfy shuffle relations (see e.g.~\cite[Sec.~2.5]{Goncharov_MultiplePolylogarithmsAndMixedTateMotives}). For example, one has
\begin{equation}
    \Li_{n_1}(x_1)\Li_{n_2}(x_2)=\Li_{n_1,n_2}(x_1,x_2)+\Li_{n_2,n_1}(x_2,x_1)+\Li_{n_1+n_2}(x_1x_2).
\end{equation}
This motivates us to define \emph{shuffle products}
\begin{equation}
    [x_1]_{n_1}[x_2]_{n_2}=[x_1,x_2]_{n_1,n_2}+[x_2,x_1]_{n_2,n_1}+[x_1x_2]_{n_1+n_2} \in \mathbb L_{n_1+n_2}(F).
\end{equation}
In terms of power series this is equivalent to
\begin{equation}
    [x_1|t_1][x_2|t_2]=[x_1,x_2|t_1,t_2]+[x_2,x_1|t_2|t_1]+\frac{1}{t_1-t_2}([x_1x_2|t_1]-[x_1x_2|t_2]).
\end{equation}
Similarly, we define a shuffle product
\begin{multline}
    [x_1,x_2|t_1,t_2][x_3|t_3]=[x_1,x_2,x_3|t_1,t_2,t_3]+[x_1,x_3,x_2|t_1,t_3,t_2]+[x_3,x_1,x_2|t_3,t_1,t_2]\\
+\frac{1}{t_1-t_3}\big([x_1x_3,x_2|t_1,t_2]-[x_1x_3,x_2|t_3,t_2]\big)+\frac{1}{t_2-t_3}\big([x_1,x_2x_3|t_1,t_2]-[x_1,x_2x_3|t_1,t_3]\big)
\end{multline}
corresponding to the shuffle relations for~$\Li_{n_1,n_2}(x_1,x_2)\Li_{n_3}(x_3)$.
\begin{theorem}\label{thm:LowDepthShuffle} The shuffle products $[x_1|t_1][x_2|t_2]$ and $[x_1,x_2|t_1,t_2][x_3,t_3]$ are zero in $\mathbb L(F)$.
\end{theorem}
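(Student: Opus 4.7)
The plan is to prove the first identity by induction on the weight $n_1+n_2$, using the relation machinery of Section~\ref{sec:Relations} to place each weight of the shuffle product into $R_n(F)$ via specialization at $y_1 = 0$. Fix $x_2 \in F$, promote $x_1$ to a formal variable $y_1$, and view
\[
P(y_1) = [y_1, x_2|t_1, t_2] + [x_2, y_1|t_2, t_1] + \tfrac{1}{t_1-t_2}\bigl([y_1 x_2|t_1] - [y_1 x_2|t_2]\bigr)
\]
as a power-series element of $\mathbb L^{\symb}(F(y_1))$ whose $t_1^{n_1-1}t_2^{n_2-1}$-coefficient is the shuffle product $[y_1]_{n_1}[x_2]_{n_2}$ in weight $n_1+n_2$.

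The central computational step is the generating-function identity
\[
\delta(P) = P \wedge \bigl(t_1[y_1]_0 + t_2[x_2]_0\bigr) \quad \text{in } \wedge^2\bigl(\mathbb L(F(y_1))\bigr),
\]
proved by direct expansion of $\delta = \delta_1+\delta_2+\delta_3+\delta_4$. The $\delta_2$ contributions $[x_2|t_2]\wedge[y_1|t_1]$ and $[y_1|t_1]\wedge[x_2|t_2]$ cancel antisymmetrically. The $\delta_4$ tails $[y_1^{-1}|t_2-t_1]$ and $[x_2^{-1}|t_1-t_2]$ are rewritten via the depth-one inversion relation $[y^{-1}|t] = [y]_0 + [y|-t]$ of Example~\ref{ex:Depth1Inv} (which holds in $\mathbb L$); combined with the $\delta_3$ contributions they produce $-[y_1x_2|t_1]\wedge[x_2]_0-[y_1x_2|t_2]\wedge[y_1]_0$, exactly cancelling the discrepancy between $\delta_1(P)$ and $P\wedge\omega$ that arises because the $[y_1x_2|t]$ pieces contribute $\delta_1$ factors wedged with $[y_1]_0+[x_2]_0$ rather than with $t_1[y_1]_0+t_2[x_2]_0$.

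Extracting the coefficient of $t_1^{n_1-1}t_2^{n_2-1}$, the right-hand side becomes a sum of weight-$(n_1+n_2-1)$ shuffle products (lower $t$-coefficients of $P$) wedged with elements of $F^*$, which vanishes in $\wedge^2(\mathbb L)$ by the inductive hypothesis. Hence each weight-$n$ coefficient of $P$ lies in $\mathcal A_n(F(y_1))$, and is proper since all wedge factors have depth $\leq 2$ and $\mathcal A_{\leq 2}$ is proper by definition. Specializing at $y_1=0$ makes every summand of $P(0)$ vanish by the conventions of Section~\ref{sec:ZeroInfinity}, so the coefficient of $t_1^{n_1-1}t_2^{n_2-1}$ in $P(y_1)-P(0)=P(y_1)$ lies in $R_{n_1+n_2}(F)$. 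The base case $n_1+n_2=2$, namely $[y_1,x_2]_{1,1}+[x_2,y_1]_{1,1}+[y_1x_2]_2=0$, follows from combining the depth-reduction of Example~\ref{ex:11} with the five-term relation.

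The second shuffle product is handled by the same three-step strategy (generating-function cobracket identity, properness, specialization at $y_1=0$) applied to the analogous five-summand power-series element $P'(y_1)\in\mathbb L^{\symb}(F(y_1))$ with $x_2,x_3$ fixed. The main obstacle is the combinatorial bookkeeping of the cobracket computation: each depth-$d$ summand contributes $O(d^2)$ wedge factors from $\delta_3$ and $\delta_4$, and one must track how the shuffle combinatorics of the summands interacts with the coproduct combinatorics of $\delta$. Working with the alternative cobracket $\delta'$ of Section~\ref{sec:AltCobracket} sidesteps the inversion-relation simplification step and reduces the matching to a purely combinatorial identity on regular summands indexed by shuffles of $\{1,2\}$ with $\{3\}$.
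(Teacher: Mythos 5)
Your proposal is correct and follows essentially the same route as the paper: the key step in both is the generating-series identity $\delta([x_1|t_1][x_2|t_2])=([x_1|t_1][x_2|t_2])\wedge(t_1[x_1]_0+t_2[x_2]_0)$ in $\wedge^2(\mathbb L(F))$, followed by induction on the weight and specialization at $0$ to kill the constant, with the depth-$(2,1)$ case handled by the analogous (longer) cobracket identity. Your extra care about properness and the explicit $R_n(F)$ bookkeeping just fills in what the paper leaves implicit, and the weight-$2$ base case is in fact automatic since the shuffle product has no weight-$1$ component.
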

\begin{proof}
A simple computation shows that
\begin{equation}\label{eq:DeltaShuff11}
    \delta([x_1|t_1][x_2|t_2])=([x_1|t_1][x_2|t_2])\wedge(t_1[x_1]_0+t_2[x_2]_0)\in\wedge^2(\mathbb L(F)).
\end{equation}
Assuming by induction that $([x_1|t_1][x_2|t_2])_{n-1}=0$ it follows that $([x_1|t_1][x_2|t_2])_n$ is constant. Setting $x_1=x_2=0$ shows that the constant is 0. Another straightforward computation shows that 
\begin{multline}\label{eq:DeltaShuff21}
\delta([x_1,x_2|t_1,t_2][x_3|t_3])=([x_1,x_2|t_1,t_2][x_3|t_3])\wedge \sum_{i=1}^3t_i[x_i]_0-[x_1|t_1]\wedge([x_2|t_2][x_3|t_3])\\
+[x_2|t_2]\wedge([x_1|t_1][x_3|t_3])+([x_1x_2|t_1][x_3|t_3])\wedge[x_2|t_2-t_1]-([x_1x_2|t_2][x_3|t_3])\wedge([x_1|t_1-t_2]+[x_1]_0)\\
+[x_1x_2x_3|t_1]\wedge([x_2|t_2-t_1] [x_3|t_3-t_1])-[x_1x_2x_3|t_2]\wedge([x_1|t_1-t_2][x_3|t_3-t_2]),
\end{multline}
so the same argument as above shows that $[x_1,x_2|t_1,t_2][x_3,t_3]=0$ as well.
\end{proof}

\begin{remark} We can similarly define shuffle products in arbitrary depth. We conjecture that they are all zero in $\mathbb L(F)$. Proving this would require showing that the cobracket of a shuffle relation is given in terms of shuffle relations in lower weight and depth. This appears to be the case.
\end{remark}

\begin{remark}\label{rm:ShuffleZeroInLsymb}
It seems worth noting that if one replaces $\delta$ by $\delta'$, \eqref{eq:DeltaShuff11} and \eqref{eq:DeltaShuff21} hold not just in $\wedge^2(\mathbb L(F))$, but also $\wedge^2(\mathbb L^{\symb}(F))$ (and not just modulo 2-torsion). 
\end{remark}

\section{The Goncharov--Rudenko Lie coalgebra}\label{sec:GoncharovRudenko}
Goncharov and Rudenko~\cite{GoncharovRudenko} also considered the problem of explicitly constructing the motivic Lie-coalgebra. They give an explicit construction in weight 4 and less. We denote their coalgebra $\mathbb L^{GR}_{\leq 4}(F)$. They define $\mathbb L^{GR}_1(F)=F^*$, $\mathbb L^{GR}_2(F)=\B_2(F)$, and define (for $n=3,4$) $\mathbb L^{GR}_n(F)$ to be the group generated by symbols $\{x\}_n$ and $\{x,y\}_{n-1,1}$ subject to certain relations, most notably a relation $Q_n$. The relationship with their work and ours is given in Theorem~\ref{thm:RelationToGR} below. Its proof is straightforward, but long, so we omit it. The main point is to show that $Q_n$ maps to an element in $R_n(F)$. We stress that their symbol $\{x,y\}_{n-1,1}$ does not correspond to our symbol $[x,y]_{n-1,1}$. We find our symbol more natural since it directly relates to the multiple polylogarithm (their symbol is related to motivic correlators).
\begin{theorem}\label{thm:RelationToGR} There is a surjective map $\mathbb L^{GR}_{\leq 4}(F)_\Q\to \mathbb L_{\leq 4}(F)_\Q$ taking $\{x\}_n$ to $[x]_n$, $\{x,y\}_{2,1}$ to $-[x/y,y]_{2,1}-[x]_3-[y]_3$ and $\{x,y\}_{3,1}$ to $-[x/y,y]_{3,1}-[x]_4+[y]_4$. The map preserves the cobracket.
\end{theorem}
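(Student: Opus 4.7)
The plan is to define the map on the three families of generators as prescribed, then verify in order (i) well-definedness, i.e.\ that every defining Goncharov--Rudenko relation maps into $R_n(F)$, (ii) compatibility with the cobracket, and (iii) surjectivity. Surjectivity should be the cleanest step: by the depth-reduction results in Section~\ref{sec:BasicExamples}, the groups $\mathbb L_n(F)_\Q$ for $n\leq 4$ are generated by symbols of depth at most $2$, with $\mathbb L_n(F)_\Q$ in weight $\leq 3$ generated by the $[x]_n$ alone, and $\mathbb L_4(F)_\Q$ generated by $[x]_4$ together with $[x,y]_{3,1}$. Each $[x]_n$ lies visibly in the image of $\{x\}_n$, and substituting $(x,y)\mapsto(xy,y)$ in the image of $\{x,y\}_{n-1,1}$ yields $-[x,y]_{n-1,1}$ plus depth-one terms, so every $[x,y]_{n-1,1}$ is in the image as well.

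For well-definedness, the routine normalization, skew-symmetry and functional relations in the Goncharov--Rudenko presentation should be dispatched one by one using the identities of Section~\ref{sec:BasicExamples} together with the inversion relations of Theorem~\ref{thm:InvRel}; for example the depth-two relation of Example~\ref{ex:11} and the five-term relation handle the weight-$2$ compatibility once $\{x\}_2$ is identified with $[x]_2$. The substantive relation to verify is the Goncharov--Rudenko relation $Q_n$ for $n=3,4$. I would first check that the image of $Q_n$ in $\mathbb L^{\symb}_n(F)$ is cobracket-closed modulo inductively verified relations, and then exhibit it as a specialization $\alpha(p)-\alpha(q)$ of a one-parameter family $\alpha(t)\in\A_n(F(t))$ that is proper in the sense of Section~\ref{sec:Relations}: concretely, promote one of the arguments of $Q_n$ to a variable $t$, verify using the shuffle identities of Section~\ref{sec:ShuffleRelations} and the depth-$2$ and $3$ cobracket formulas that the resulting family lies in $\A_n(F(t))$, and then recognize the image of $Q_n$ as the difference of two specializations where all terms are defined.

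Cobracket compatibility is verified separately on each generator. The case $\{x\}_n\mapsto[x]_n$ reduces to the standard observation that the restriction of the Goncharov--Rudenko cobracket to the depth-one subalgebra agrees with $\delta[x]_n=[x]_{n-1}\wedge[x]_0$, recovering the Bloch-complex boundary. The nontrivial check is on $\{x,y\}_{n-1,1}$: one expands $\delta\bigl(-[x/y,y]_{n-1,1}-[x]_n\pm[y]_n\bigr)$ via the depth-$2$ cobracket formula in Section~\ref{sec:Lsymb}, rewrites each inverted symbol $[(x/y)^{-1}]_k$ using Theorem~\ref{thm:InvRel}, and matches the result termwise against the pushforward of the Goncharov--Rudenko cobracket of $\{x,y\}_{n-1,1}$. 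The specific signs in the definition of the map (the $-[x]_3-[y]_3$ for $n=3$ versus $-[x]_4+[y]_4$ for $n=4$) are arranged precisely so that the self-wedge terms $[x]_{n-1}\wedge[x]_0$ and $[y]_{n-1}\wedge[y]_0$ cancel against each other with the correct signs coming from the parity of the inversion formula.

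The main obstacle is the $Q_n$ verification: producing the explicit proper family $\alpha(t)\in\A_n(F(t))$ whose boundary specialization realizes the image of $Q_n$ involves organizing a large number of terms via the shuffle and inversion identities, and this combinatorial bookkeeping is what the authors mean when they describe the proof as \emph{straightforward but long}. The remaining cobracket match, while also lengthy, is mechanical once the structure of the map is fixed, and surjectivity follows essentially for free from the depth-reduction lemmas already in the paper.
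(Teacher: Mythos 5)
The paper omits the proof of this theorem entirely, recording only that ``the main point is to show that $Q_n$ maps to an element in $R_n(F)$,'' so there is no detailed argument to compare yours against. Your outline is consistent with that stated strategy: you correctly identify the $Q_n$ verification as the crux, you propose to realize its image as a specialization difference $\alpha(p)-\alpha(q)$ of a proper element of $\A_n(F(X))$ exactly as $R_n(F)$ is defined, and your surjectivity argument (depth reduction of $\mathbb L_{\leq 3}(F)_\Q$ to depth-one symbols and of $\mathbb L_4(F)_\Q$ to $[x]_4$ and $[x,y]_{3,1}$, followed by the substitution $(x,y)\mapsto(xy,y)$ in the image of $\{x,y\}_{n-1,1}$) is correct and does follow essentially for free from Section~\ref{sec:BasicExamples}.

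That said, what you have written is a plan rather than a proof, and the two decisive verifications are deferred rather than done. First, you never exhibit the proper family $\alpha(t)\in\A_n(F(t))$ whose specialization difference equals the image of $Q_n$; for $n=4$ this is a computation with a large number of terms, and properness in the sense of Section~\ref{sec:Relations} is an inductive condition on how $\delta(\alpha)$ can be rearranged, which must be checked separately from mere membership in $\Ker\delta$. Second, the termwise match of $\delta\bigl(-[x/y,y]_{n-1,1}-[x]_n\pm[y]_n\bigr)$ against the pushforward of the Goncharov--Rudenko cobracket of $\{x,y\}_{n-1,1}$ is asserted, not performed; your heuristic that the signs $-[x]_3-[y]_3$ versus $-[x]_4+[y]_4$ are forced by cancellation of the self-wedge terms is plausible but unverified, and this is exactly where a sign error would hide. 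Neither step looks likely to fail in principle, but until both are carried out explicitly the argument is an outline of the paper's omitted proof, not a replacement for it.
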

\begin{remark} We suspect that the map is an isomorphism, but we do not know if all of our relations can be expressed in terms of the $Q_n$ relations.
\end{remark}

\bibliographystyle{alpha}
\bibliography{Bibliography}

\end{document}